\documentclass{article}

\usepackage[utf8]{inputenc}     
\usepackage[T1]{fontenc}         
\usepackage{lmodern}             

\usepackage{enumitem}
\usepackage{amsmath}
\usepackage{bbm,bm}
\usepackage{amssymb}
\usepackage{amsthm}
\usepackage{graphicx}
\graphicspath{{save_repo/}}
\usepackage{mathtools}
\usepackage[margin=3cm]{geometry}
\usepackage{array}
\usepackage{braket}
\usepackage[numbers]{natbib}
\usepackage{algorithm}
\usepackage{algorithmic}

\usepackage{authblk}

\numberwithin{equation}{section} 

\usepackage{hyperref}
\usepackage{xcolor}
\hypersetup{
    colorlinks,
    linkcolor={red!50!black},
    citecolor={blue!50!black},
    urlcolor={blue!80!black}
}
\usepackage[capitalize,noabbrev]{cleveref}  

\theoremstyle{plain}
\newtheorem{theorem}{Theorem}[section]
\newtheorem{definition}[theorem]{Definition} 

\newtheorem{proposition}[theorem]{Proposition} 
\newtheorem{lemma}[theorem]{Lemma} 
\newtheorem{corollary}[theorem]{Corollary} 
\newtheorem{remark}[theorem]{Remark} 

\newcommand{\R}{\mathbb{R}}
\newcommand{\N}{\mathbb{N}}

\newcommand{\XX}{\mathcal{X}}
\newcommand{\YY}{\mathcal{Y}}

\newcommand{\PP}{\mathcal{P}}

\newcommand{\MM}{\mathcal{M}}
\newcommand{\CC}{\mathcal{C}}

\newcommand{\LL}{\mathcal{L}}

\newcommand{\dd}{\mathrm{d}}

\newcommand{\Pers}{\mathrm{Pers}}  
\newcommand{\diam}{\mathrm{diam}}
\newcommand{\upperdiag}{\Omega} 
\newcommand{\groundspace}{\Omega} 
\newcommand{\thediag}{{\partial \upperdiag}} 

\newcommand{\defeq}{\vcentcolon=}
\newcommand{\eqdef}{=\vcentcolon}


\newcommand{\Adm}{\mathrm{Adm}}


\newcommand{\cvweak}{\xrightarrow{w}}

\newcommand{\id}{\mathrm{id}}
\renewcommand{\epsilon}{\varepsilon}

\newcommand{\aprox}{\mathrm{aprox}}

\newcommand{\KL}{\mathrm{KL}}
\newcommand{\MMD}{\mathrm{MMD}}
\newcommand{\TV}{\mathrm{TV}}

\newcommand{\Sk}{\mathrm{Sk}}

\newcommand{\FG}{\mathrm{FG}}
\newcommand{\OT}{\mathrm{OT}}

\newcommand{\cdiag}{c_{\thediag}}

\DeclareMathOperator*{\argmin}{arg\,min}

\title{An Homogeneous Unbalanced Regularized Optimal Transport model with applications to Optimal Transport with Boundary}
\author{Théo Lacombe}
\affil{LIGM, Université Gustave Eiffel}
\date{}

\begin{document}
\maketitle

\begin{abstract}
	This work studies how the introduction of the entropic regularization term in unbalanced Optimal Transport (OT) models may alter their homogeneity with respect to the input measures. We observe that in common settings (including balanced OT and unbalanced OT with Kullback-Leibler divergence to the marginals), although the optimal transport cost itself is not homogeneous, optimal transport plans and the so-called Sinkhorn divergences are indeed homogeneous. 
	However, homogeneity does not hold in more general Unbalanced Regularized Optimal Transport (UROT) models, for instance those using the Total Variation as divergence to the marginals. 
	We propose to modify the entropic regularization term to retrieve an UROT model that is homogeneous while preserving most properties of the standard UROT model. 
	We showcase the importance of using our Homogeneous UROT (HUROT) model when it comes to regularize Optimal Transport with Boundary, a transportation model involving a spatially varying divergence to the marginals for which the standard (inhomogeneous) UROT model would yield inappropriate behavior.
\end{abstract}

\section{Introduction}

Optimal Transport (OT) literature can be traced back to the seminal work of Monge \citep{ot:monge1784seminal}, where Monge proposes a way to interpolate between two distributions of mass, represented by two probabilities measures $\alpha,\beta$ supported on some space $\groundspace$, while minimizing a cost representing the total effort spent to move each element of mass in $\alpha$ to a corresponding one in $\beta$. 
In its modern formulation due to Kantorovich \citep{ot:kantorovich1942translocation}, the OT problem is introduced as a linear program $\OT(\alpha,\beta) \defeq \min_\pi \iint c(x,y) \dd \pi(x,y)$ over transport plans $\pi \in \Pi(\alpha,\beta)$ that correspond to measures supported on $\groundspace \times \groundspace$ whose marginals are exactly $\alpha$ and $\beta$. 
Here, $c(x,y)$ denotes the cost of transporting some mass located at $x$ to $y$. 
When $\groundspace \subset \R^d$ is convex and $c(x,y) = \|x-y\|^p$, the infimum value reached (to the power $1/p$) defines a metric between probability measures supported on $\groundspace$ called the Wasserstein distance. 
In addition, any optimal $\pi \in \Pi(\alpha,\beta)$ induces an interpolation between $\alpha$ and $\beta$ by setting $\mu_t \defeq A_t \# \pi \defeq \pi(A_t^{-1}(X))$ where $A_t(x,y) \defeq (1-t)x + ty$ that turns out to be a geodesic between $\alpha$ and $\beta$ for the Wasserstein distance and can also be understood as the solution of the so-called continuity equation (see for instance \citep[Thm.~7.21]{ot:villani2008optimal} and \citep[\S 5.4]{otam}). 
More generally,  gradient flows induced by transportation problems are closely related to evolutionary equations \citep{ambrosio2005gradient}. 

Naturally, this physical interpretation suggests that optimal transport models should be homogeneous with respect to the input measures $\alpha$ and $\beta$: loosely speaking, encoding the mass of $\alpha$ and $\beta$ in grams or in kilograms should not change the structure of the solutions we obtain to describe the behavior of a physical system. 
Formally, it means that if $\pi$ is an optimal transport plan between $\alpha$ and $\beta$, we expect $\lambda \pi$ (or, at least, some scaled version of $\pi$) to be an optimal transport plan between $\lambda \alpha$ and $\lambda \beta$, for some scaling factor $\lambda > 0$. 
Fortunately, this clearly holds in the standard formulation of OT (the objective function and the constraints are linear). 
While this formulation is restricted to measures with the same total masses (and, by homogeneity, boils down to probability measures), models of Unbalanced OT (UOT) have been proposed to handle measures with possibly different total masses by relaxing the marginal constraints (see \citep{ot:chizat2015unbalanced,ot:liero2015optimalUOT} and \cref{subsec:background_UROT}). 
Of interest in this work and developed in \cref{sec:OT_with_boundary} is the framework of \emph{Optimal Transport with Boundary} (OTB) proposed by Figalli and Gigli \citep{ot:figalli2010newTransportationDistance} to model heat diffusion process with Dirichlet boundary conditions. 
Their model enables the comparison of measures with different total masses by allowing the transportation of any amount of mass to, and from, the boundary $\thediag$ of the domain $\groundspace$ provided we pay the corresponding cost $c(\cdot,\thediag)$. 
Here as well, all these models of UOT are homogeneous. 

\paragraph{} A parallel line of development---mainly popularized by the work of Cuturi \citep{ot:cuturi2013sinkhorn}---proposes to regularize the standard OT model between probability measures by adding an entropic regularization term $+\epsilon \KL(\pi | \alpha \otimes \beta)$ where $\epsilon > 0$ is a regularization parameter, and $\KL(\mu|\nu) = \int \log \left(\frac{\dd \mu}{\dd \nu}\right) \dd \mu$ denotes the Kullback-Leibler divergence (here, between probability measures). 
This approach was initially motivated by computational aspects: the resulting problem becomes strictly convex and can be solved efficiently using the Sinkhorn algorithm: a fixed-point algorithm that only involves matrix manipulations (hence usable efficiently on modern hardware as GPUs). 
Nonetheless, this model appears to be supported by strong theoretical properties, in particular through the introduction of an ``unbiased'' version called the Sinkhorn divergences \citep{ot:ramdas2017wasserstein,ot:feydy2019interpolating}, presented in \cref{subsec:background_UROT}. 
Unbalanced and Regularized OT have been mixed together in the works \citep{ot:chizat2018scaling,ot:sejourne2019sinkhorn} in a setting that covers most UOT models (though not directly the OTB one). 
However, the resulting Unbalanced Regularized OT model (UROT) may fail to be homogeneous, mostly because of the introduction of the (non-linear) term $\alpha \otimes \beta$. 
In particular, naive adaptations of \citep{ot:sejourne2019sinkhorn} to introduce an entropic regularization in the OTB model will suffer with heavy inhomogeneity, hindering its use in practice and calling for the development of an entropic regularization term that would preserve homogeneity.

\subsection*{Outline and Contributions}

This paper is organized in the following way:
\begin{itemize}
	\item \cref{sec:background} presents the background on OT theory on which this work relies, including its regularized and unbalanced variants. 
	\item \cref{sec:inhomogeneity_std} studies the (in)homogeneity properties of Unbalanced Regularized OT in its standard formulation. We prove in particular that in the natural settings of balanced OT and KL-penalized marginals, although the transport cost itself is not homogeneous, the corresponding Sinkhorn divergence appears to be homogeneous thanks to the addition of a ``mass bias'' proposed by Séjourné et al. It gives a new perspective in favor of the use of this ``unbiased'' formulation of entropic OT in these contexts. We show that, however, in a more general setting (for instance when using the Total Variation as the marginal penalty), homogeneity does not hold in the standard UROT model.
	\item \cref{sec:HUROT} introduces a model of \emph{Homogeneous} Unbalanced Regularized OT (HUROT). This models enjoys most of the properties of the standard one (UROT): it is solved by applying the Sinkhorn algorithm to renormalized measures, is continuous with respect to the weak convergence of measures, and the corresponding Sinkhorn divergence is positive without the need to introduce a mass bias term. 
	\item Eventually, \cref{sec:OT_with_boundary} introduces a model of Regularized OT with Boundary (ROTB). We showcase the importance of enforcing homogeneity in this model using the approach developed in \cref{sec:HUROT}. Of importance, we prove that the resulting ROTB model, in addition to the properties it shares with the HUROT model (continuity, positivity of the Sinkhorn divergence, etc.), implies the same notion of convergence as its unregularized counterpart, which legitimates our approach as a consistent way to regularize this spatially varying UOT model. 
\end{itemize}
Our code is publicly available at \url{https://github.com/tlacombe/homogeneousUROT}.

\section{Background}
\label{sec:background}

\subsection{Preliminary definitions and notation}

In this work, $\groundspace$ denotes a compact subset of $\R^d$, $c : \groundspace \times \groundspace \to \R_+$ is a cost function that is assumed to satisfy $c(x,x)=0$, to be symmetric, and Lipschitz continuous on $\groundspace$, typically $c(x,y) = \| x - y\|^2$. 
The set $\MM(\groundspace)$ denotes the space of (non-negative) Radon measures supported on $\groundspace$, and $\PP(\groundspace) = \{ \alpha \in \MM(\groundspace),\ m(\alpha) = 1\}$ denotes the subset of probability measures, that is measures of total mass $m(\alpha) \defeq \alpha(\groundspace) = 1$. 
With the exception of \cref{sec:OT_with_boundary}, we also assume that the total masses of the measures are finite. 

Given a measure $\alpha \in \MM(\groundspace)$ and a function $f \in \CC(\groundspace)$, we use the notation $\braket{\cdot,\cdot}$ to denote the duality product, that is 
\begin{equation}
	\braket{f,\alpha} \defeq \int_\groundspace f(x) \dd \alpha(x).
\end{equation}
Given a function $K : \groundspace \times \groundspace \to \R_+$ and $\mu,\nu \in \MM(\groundspace)$, we also introduce the notations 
\begin{align*}
	\braket{\mu,\nu}_K &\defeq \iint K(x,y) \dd \mu(x) \dd \nu(y), \\
	\| \mu - \nu\|^2_K &\defeq \braket{ \mu - \nu , \mu - \nu}_K.
\end{align*}
We say that $K$ defines a positive definite kernel when $\|\mu-\nu\|_K \geq 0$, with equality if and only if $\mu = \nu$. 
In the following, we assume that $K_\epsilon : (x,y) \mapsto e^{-\frac{c(x,y)}{\epsilon}}$ defines a positive definite kernel for any $\epsilon > 0$ (which holds if, for instance, $c(x,y) = \|x-y\|$ or $\|x-y\|^2$). 
We say that a sequence of measures $(\alpha_n)_n \in \MM(\groundspace)^\N$ converges \emph{weakly} toward some $\beta \in \MM(\groundspace)$, denoted by $\alpha_n \cvweak \beta$, if for any continuous (bounded) map $f$ one has $\braket{f,\alpha_n} \to \braket{f, \beta}$. 
Note that this implies $m(\alpha_n) \to m(\beta)$. 

A function $\varphi : [0,+\infty) \to [0,+\infty]$ is said to be an \emph{entropy function} if it is convex, lower-semi-continuous and satisfies $\varphi(1) = 0$. We also set the convention $\varphi(p) = +\infty$ whenever $p < 0$ and, in this work, we will only consider entropy functions that satisfy $\varphi(0) < \infty$. 
Of interest is its \emph{Legendre transform}, defined by $\varphi^* : q \mapsto \sup_{p \geq 0} pq - \varphi(p)$.

For any two measures $\alpha,\beta \in \MM(\groundspace)$ satisfying $\alpha \ll \beta$ (that is, $\forall X \subset \groundspace,\ \beta(X) = 0 \Rightarrow \alpha(X) = 0$), one can define the \emph{Radon-Nikodym derivative} $\frac{\dd \alpha}{\dd \beta} : \groundspace \to \R_+$ which is characterized by the relation $\alpha = \frac{\dd \alpha}{\dd \beta} \beta$. 
From this, an entropy function $\varphi$ can be used to define the $\varphi$-\emph{divergence}:
\begin{equation}
	D_\varphi(\alpha | \beta) \defeq \braket{\varphi \circ \frac{\dd \alpha}{\dd \beta} , \beta} = \int_\groundspace \varphi\left( \frac{\dd \alpha}{\dd \beta}(x) \right) \dd \beta(x).
\end{equation}

Among the notorious choices to define a $\varphi$-divergence, one has $\varphi(p) = p \log(p) - p + 1$, whose Legendre transform is $\varphi^*(q) = e^q - 1$, and which defines the so-called \emph{Kullback-Leibler} divergence $D_\varphi = \KL$. 
As another example that will play an important role in this work, the \emph{Total Variation} between measures can also be retrieved as a $\varphi$-divergence by taking $\varphi(p) = |1 - p|$, yielding $D_\varphi(\alpha | \beta) = \int_\groundspace |\dd \alpha(x) - \dd \beta(x)| \eqdef \TV(\alpha - \beta)$. 
Finally, the \emph{convex indicator function} is defined by $\imath_c(p) = 0$ if $p=1$, and $+\infty$ otherwise, so that $D_{\imath_c}(\alpha|\beta) = 0$ if $\alpha = \beta$, and $+\infty$ otherwise. 
Note that $\imath_c^* = \id$, the identity map.

Finally, a function $F : \XX \to \YY$ (for some Banach spaces $\XX,\YY$) is said to be $h$-homogeneous if there exists a constant $h > 0$ such that for any $(\lambda,x) \in \R \times \XX$ we have $F(\lambda x) = \lambda^h F(x)$. 
When $h=1$, we will simply say that $F$ is homogeneous. 

\subsection{Balanced regularized Optimal Transport}
\label{subsec:background_balancedOT}

Let $\alpha,\beta \in \PP(\groundspace)$ denote two probability measures. We denote by $\Pi(\alpha,\beta) \defeq \{ \pi \in \MM(\groundspace\times \groundspace),\ \pi(\cdot, \groundspace) = \alpha, \pi(\groundspace,\cdot) = \beta \}$ the corresponding \emph{set of transport plans between $\alpha$ and $\beta$}, that is the measures $\pi$ supported on $\groundspace\times \groundspace$ whose marginals $\pi_1,\pi_2$ are equal to $\alpha,\beta$, respectively. The optimal transport cost between $\alpha$ and $\beta$ is defined as
\begin{equation}\label{eq:vanillaOT}
	\OT(\alpha,\beta) \defeq \inf_{\pi \in \Pi(\alpha,\beta)} \braket{\pi,c},
\end{equation}
and any minimizer of this problem is said to be an optimal transport plan between the two measures. 

Though widely studied during the second-half of the 20\textsuperscript{th} century---we refer the interested reader to \citep{ot:villani2008optimal} for a thorough presentation---its use in real-life applications remained limited mostly due to its computational burden: in practical settings (where $\alpha,\beta$ are discrete probability measures supported on $\sim n$ points), \eqref{eq:vanillaOT} requires $\mathcal{O}(n^3 \log(n))$ operations to be solved. 

In 2013, Cuturi significantly contributed to popularize the practical use of OT (in particular in the machine learning community) by observing that its entropic regularized version can be solved efficiently on modern hardware \cite{ot:cuturi2013sinkhorn}, see \citep{ot:CuturiPeyre2017COT} for an extensive overview of the computational aspects of OT. 
In its modern form, this regularized problem reads, for a parameter $\epsilon > 0$,
\begin{align}
	\OT_\epsilon(\alpha,\beta) &\defeq \inf_{\pi \in \Pi(\alpha,\beta)} \braket{\pi,c} + \epsilon \KL(\pi | \alpha \otimes \beta), \label{eq:balanced_reg_OT_primal} \\ 
	&= \sup_{f,g \in \CC(\groundspace)} \braket{f,\alpha} + \braket{g,\beta} - \epsilon \braket{e^{\frac{f \oplus g - c}{\epsilon}} - 1, \alpha \otimes \beta} \label{eq:balanced_reg_OT_dual},
\end{align}
where \eqref{eq:balanced_reg_OT_primal} is referred to as the primal problem and \eqref{eq:balanced_reg_OT_dual} as its dual. 
It is worth noting that despite its appealing computational properties, $\OT_\epsilon$ does not define a proper divergence between probability measures. 
In particular, $\alpha \mapsto \OT_\epsilon(\alpha,\beta)$ is \textbf{not} minimized for $\alpha=\beta$. 
This phenomenon, called the \emph{entropic bias} \citep{ot:janati2020debiased}, can be corrected by introducing the associated \emph{Sinkhorn divergence} \citep{ot:ramdas2017wasserstein,ot:genevay2018learning}, defined by
\begin{equation}\label{eq:skdiv_balanced}
	\Sk_\epsilon(\alpha,\beta) \defeq \OT_\epsilon(\alpha,\beta) - \frac{1}{2}\OT_\epsilon(\alpha,\alpha) - \frac{1}{2} \OT_\epsilon(\beta,\beta).
\end{equation}
Deeply studied in \citep{ot:feydy2019interpolating}, it can be proved that $\Sk_\epsilon(\alpha,\beta) \geq 0$, with equality if, and only if, $\alpha = \beta$. 
In addition, while $\Sk_\epsilon(\alpha,\beta) \to \OT(\alpha,\beta)$ when $\epsilon \to 0$, one also has that $\Sk_\epsilon(\alpha,\beta) \to \iint c(x,y) \dd (\alpha - \beta)(x) \dd (\alpha - \beta)(y) \eqdef \MMD(\alpha,\beta)$ in the regime $\epsilon \to \infty$, the later quantity being referred to as the \emph{Maximum Mean Discrepancy} (MMD) between $\alpha$ and $\beta$ \citep{gretton2012kernel}, another type of divergence between probability measures (note that this does not hold if the total masses of the measures is not precisely equal to $1$). 
This observation sheds a new light on the role of the regularization parameter $\epsilon$ as a way to interpolate between two kind of distances between probability measures inducing a natural trade-off between computational efficiency (MMD) and geometric accuracy (OT).

\subsection{Unbalanced Sinkhorn Divergences}
\label{subsec:background_UROT}

The problems introduced in \cref{subsec:background_balancedOT} are restricted to probability measures or, slightly more generally, to measures $\alpha,\beta$ with the same total masses $m(\alpha) = m(\beta)$. 
Indeed, $\Pi(\alpha,\beta)$ is otherwise empty, making the problem infeasible. 
This setting is referred to as \emph{balanced} OT. 
One way to extend \eqref{eq:vanillaOT} to measures of different total masses is to relax the marginal constraints using a $\varphi$-divergence. 
The \emph{unbalanced} OT problem reads, for a given entropy function $\varphi$:
\begin{equation}\label{eq:unbalanced_non_reg_OT}
	\OT_\varphi(\alpha,\beta) = \inf_{\pi \in \MM(\groundspace \times \groundspace)} \braket{c,\pi} + D_\varphi(\pi_1 | \alpha) + D_\varphi(\pi_2 | \beta). 
\end{equation}

Following \citep{ot:chizat2018scaling}, unbalanced and regularized OT can be mixed together yielding the following problems, dual of each other:
\begin{align}
	\OT_{\epsilon,\varphi}(\alpha,\beta) &\defeq \inf_{\pi \in \MM(\groundspace \times \groundspace)} \braket{\pi,c} + D_\varphi(\pi_1|\alpha) + D_\varphi(\pi_2|\beta) + \epsilon \KL(\pi | \alpha \otimes \beta) \label{eq:UROT_primal} \\
	& = \sup_{f,g \in \CC(\groundspace)} \braket{-\varphi^*(-f),\alpha} + \braket{-\varphi^*(-g),\beta} - \epsilon \braket{e^{\frac{f \oplus g - c}{\epsilon}} - 1, \alpha \otimes \beta} \label{eq:UROT_dual}
\end{align}
In the following, we will refer to this formulation as the \emph{standard Unbalanced Regularized OT (UROT) model}. 
Note that setting $\varphi = \imath_c$ retrieves \eqref{eq:balanced_reg_OT_primal} (balanced regularized OT) and setting $\epsilon=0$ retrieves \eqref{eq:unbalanced_non_reg_OT} (unbalanced OT).

This model has been deeply studied in \citep{ot:sejourne2019sinkhorn}. 
In particular, authors prove that the dual problem \eqref{eq:UROT_dual} can be solved by iterating an adapted version of the Sinkhorn algorithm that reads \citep[Def.~3]{ot:sejourne2019sinkhorn} which consists of building a sequence $(f_t,g_t)_t$ defined by
\begin{equation} \label{eq:sinkhorn_algorithm_std}
	\begin{aligned}
		f_{t+1}(x) &= -\aprox_{\epsilon,\varphi^*}\left( \epsilon \log \braket{ e^{\frac{g_t - c(x,\cdot)}{\epsilon}} , \beta} \right), \\
		g_{t+1}(y) &= -\aprox_{\epsilon,\varphi^*} \left( \epsilon \log \braket{ e^{\frac{f_{t+1} - c(\cdot,y) }{\epsilon}} , \alpha} \right),
	\end{aligned}
\end{equation}
where $\aprox_{\epsilon,\varphi^*}$ is the \emph{anisotropic proximity operator} \citep[Def.~2]{ot:sejourne2019sinkhorn} associated to (the Legendre transform of) the divergence $\varphi$ defined by
\begin{equation}\label{eq:def_aprox}
	\aprox_{\epsilon,\varphi^*}(p) \defeq \argmin_{q \in \R} \epsilon e^{\frac{p-q}{\epsilon}} + \varphi^*(q).
\end{equation}
Crucially, a couple $(f,g) \in \CC(\groundspace)$ is optimal for \eqref{eq:UROT_dual} if, and only if, it is a fixed point of the map $(f_t,g_t) \mapsto (f_{t+1},g_{t+1})$ \citep[Prop.~8]{ot:sejourne2019sinkhorn}. 
Furthermore, any sequence $(f_t,g_t)_t$ built following \eqref{eq:sinkhorn_algorithm_std} is guaranteed to converge towards such a fixed point (that is, an optimal pair of potentials) under mild assumptions \citep[Thm.~1]{ot:sejourne2019sinkhorn} that are satisfied in this work (namely, $c$ must be Lipschitz continuous on $\groundspace$, and one must be able to restrict \eqref{eq:UROT_dual} to a compact subset of $\CC(\groundspace)$, which is possible in our settings of interest: $D_\varphi=D_{\imath_c}$, $\TV$ or $\KL$, see \citep[Lemmas 8, 9]{ot:sejourne2019sinkhorn}). 
Eventually, if $(f,g)$ is optimal for the dual problem \eqref{eq:UROT_dual}, then 
\begin{equation}\label{eq:relation_primal_dual_std}
	\pi \defeq \exp\left( \frac{f \oplus g - c}{\epsilon} \right) \alpha \otimes \beta
\end{equation}
is optimal for the primal problem \eqref{eq:UROT_primal}. 

Finally, the authors introduce the unbalanced Sinkhorn divergence between $\alpha$ and $\beta$:
\begin{equation}\label{eq:skdiv_unbalanced_sejourne}
	\Sk_{\epsilon,\varphi} (\alpha,\beta) \defeq \OT_{\epsilon,\varphi}(\alpha,\beta) - \frac{1}{2}\OT_{\epsilon,\varphi}(\alpha,\alpha) - \frac{1}{2} \OT_{\epsilon,\varphi}(\beta,\beta) + \frac{\epsilon}{2} (m(\alpha) - m(\beta))^2.
\end{equation}
They prove that this formulation enjoys most of the properties of its balanced counterpart \eqref{eq:skdiv_balanced}, in particular it is continuous with respect to the weak convergence, non-negative, satisfies $\Sk_{\epsilon,\varphi}(\alpha,\beta) = 0 \Leftrightarrow \alpha = \beta$, is convex with respect to each of its entries, and induces the same topology as weak convergence on the set $\MM_{\leq m}(\groundspace)$ of Radon measures with total mass uniformly bounded by $m > 0$, that is $\Sk_{\epsilon,\varphi}(\alpha_n,\alpha) \to 0 \Leftrightarrow \alpha_n \cvweak \alpha$. Note however that contrary to the balanced case, it does not converge to some sort of distance between $\alpha$ and $\beta$ when $\epsilon \to \infty$ \citep[Proposition 17]{ot:sejourne2019sinkhorn}.

\begin{remark}\label{remark:mass_bias_std}
The presence of the term $+\frac{\epsilon}{2} (m(\alpha) - m(\beta))^2$ in \eqref{eq:skdiv_unbalanced_sejourne}, called the \emph{mass bias}, is required to make the unbalanced Sinkhorn divergence non-negative (and convex). 
Intuitively, this term arises from the constant term $-\epsilon \braket{-1,\alpha \otimes \beta} = \epsilon m(\alpha) m(\beta)$ in \eqref{eq:UROT_dual}: while in the balanced case ($m(\alpha) = m(\beta)$), these terms cancel each other when computing the Sinkhorn divergence \eqref{eq:skdiv_balanced}, in the unbalanced case, they yield a constant term $\epsilon(m(\alpha)m(\beta) - \frac{1}{2} m(\alpha)^2 - \frac{1}{2} m(\beta)^2) = -\frac{\epsilon}{2}(m(\alpha) - m(\beta))^2$ that must be compensated by the mass bias term to ensure the good behavior of the model, in particular its non-negativity. 
\end{remark}

\section{Homogeneity and inhomogeneity in the standard model}
\label{sec:inhomogeneity_std}

In this section, we study the homogeneity properties of the standard  model \eqref{eq:UROT_primal} presented in \cref{subsec:background_UROT} with respect to the couple of input measures $(\alpha,\beta)$. 

First, let us stress that non-regularized OT, should it be balanced \eqref{eq:vanillaOT} or not \eqref{eq:unbalanced_non_reg_OT}, is homogeneous in $(\alpha,\beta)$, that is
\[ \OT_{\epsilon=0,\varphi}(\lambda \alpha,\lambda \beta) = \lambda \cdot \OT_{\epsilon=0,\varphi}(\alpha,\beta) \]
for any $\lambda \geq 0$. 
Furthermore, if $\pi$ is an optimal transport plan between $\alpha$ and $\beta$, then $\lambda \pi$ is an optimal transport plan between $\lambda \alpha$ and $\lambda \beta$. 
As mentioned in the introduction, this behavior is desirable as an optimal transport plan may be used as a way to interpolate between $\alpha$ and $\beta$, and it would be surprising that a change of scale in the masses of the measures induces a structural change in the interpolation between the two measures.

However, the addition of the entropic regularization term which, in the dual \eqref{eq:UROT_dual}, reads\\ $-\epsilon \braket{e^{\frac{f \oplus g - c}{\epsilon}} - 1,\alpha \otimes \beta}$ induces a seemingly peculiar behavior in terms of homogeneity. 
Namely, if we let
\begin{equation}
	J_{(\alpha,\beta)}(f,g) \defeq \braket{-\varphi^*(-f),\alpha} + \braket{-\varphi^*(-g),\beta} -\epsilon \braket{e^{\frac{f \oplus g - c}{\epsilon}} - 1,\alpha \otimes \beta},
\end{equation}
one has
\[ J_{(\lambda \alpha, \lambda \beta)}(f,g) = \lambda \braket{-\varphi^*(-f),\alpha} + \lambda \braket{-\varphi^*(-g),\beta} - \lambda^2 \epsilon \braket{e^{\frac{f \oplus g - c}{\epsilon}} - 1,\alpha \otimes \beta}, \]
inducing a quadratic term in $\lambda$ that may hinder homogeneity. 

The goal of this section is to investigate the impact of this apparent inhomogeneity in the standard UROT model \eqref{eq:UROT_primal}. 

\subsection{The balanced case}
\label{subsec:homogene_balanced}
We first consider the case of regularized balanced optimal transport \eqref{eq:balanced_reg_OT_primal}; where $\varphi = \imath_c$. 
The following lemma describes the effect of a scaling of the measures on the sequence of potentials produced by the Sinkhorn algorithm \eqref{eq:sinkhorn_algorithm_std}.

\begin{lemma}\label{lemma:sk_update_balanced}
Let $\alpha,\beta \in \MM(\groundspace)$ be two measures of total mass $m(\alpha) = m(\beta) = m$. 
Fix $(f_0,g_0) \in \CC(\groundspace)$ and let $(f_t,g_t)_{t\geq 1}$ denote the sequence of dual potentials produced iterating \eqref{eq:sinkhorn_algorithm_std} starting from $(f_0, g_0)$ for the couple $(\alpha,\beta)$. 
Let $(f_t^{(\lambda)}, g^{(\lambda)}_t)_t$ denote the sequence produced starting from $(f_0, g_0)$ for the couple $(\lambda \alpha,\lambda \beta)$. 
Then, for all $t \geq 1$,
\[ (f_t^{(\lambda)}, g^{(\lambda)}_t) = (f_t - \epsilon \log(\lambda), g_t). \]
\end{lemma}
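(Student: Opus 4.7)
The plan is to induct on $t \geq 1$, exploiting the very simple form that the Sinkhorn updates \eqref{eq:sinkhorn_algorithm_std} take in the balanced setting.

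The key preliminary observation is that when $\varphi = \imath_c$, one has $\varphi^* = \id$, and the first-order condition for the minimization defining $\aprox_{\epsilon,\id}(p)$ in \eqref{eq:def_aprox} gives $-e^{(p-q)/\epsilon} + 1 = 0$, hence $\aprox_{\epsilon,\id}(p) = p$. The updates \eqref{eq:sinkhorn_algorithm_std} therefore reduce to the classical log-sum-exp form
\begin{align*}
	f_{t+1}(x) &= -\epsilon \log \braket{e^{(g_t - c(x,\cdot))/\epsilon}, \beta}, \\
	g_{t+1}(y) &= -\epsilon \log \braket{e^{(f_{t+1} - c(\cdot,y))/\epsilon}, \alpha}.
\end{align*}

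For the base case $t = 1$, I would apply this formula with $\beta$ replaced by $\lambda \beta$ and use $\log(\lambda r) = \log \lambda + \log r$ to read off $f_1^{(\lambda)} = f_1 - \epsilon \log \lambda$ immediately. Then, when computing $g_1^{(\lambda)}$ from $f_1^{(\lambda)}$ and $\lambda \alpha$, the additive shift $-\epsilon \log \lambda$ inside the exponential factors out as a multiplicative $\lambda^{-1}$ that exactly cancels the $\lambda$ in front of $\alpha$, yielding $g_1^{(\lambda)} = g_1$.

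The inductive step from $t$ to $t+1$ is word-for-word identical to the base case: assuming $(f_t^{(\lambda)}, g_t^{(\lambda)}) = (f_t - \epsilon \log \lambda, g_t)$, the update for $f_{t+1}^{(\lambda)}$ uses the unchanged $g_t$ together with $\lambda \beta$ and produces $f_{t+1} - \epsilon \log \lambda$, while the update for $g_{t+1}^{(\lambda)}$ combines the $\lambda^{-1}$ coming from the shift in $f_{t+1}^{(\lambda)}$ with the $\lambda$ in front of $\alpha$ and they cancel. I do not anticipate any genuine obstacle: the argument is pure bookkeeping around the identity $\log(\lambda r) = \log \lambda + \log r$ and the asymmetric structure of the two updates — only $f_{t+1}$ integrates against $\beta$ and only $g_{t+1}$ integrates against $\alpha$ — which is precisely why the $\epsilon \log \lambda$ shift lives on one potential but not the other.
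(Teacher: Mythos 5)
Your proposal is correct and follows essentially the same route as the paper's proof: reduce the updates to the log-sum-exp form using $\aprox_{\epsilon,\id}=\id$, compute the first iteration explicitly to see that the $-\epsilon\log\lambda$ shift appears in $f_1^{(\lambda)}$ and then cancels against the $\lambda$ in front of $\alpha$ when forming $g_1^{(\lambda)}$, and close by induction. The paper carries the computation one step further (to $f_2^{(\lambda)}$) before invoking induction, but this is purely presentational.
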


Hence, scaling the measures by a factor $\lambda$ reflects as a shift of $-\epsilon \log(\lambda)$ in the first potential of the sequence produced by the Sinkhorn algorithm, yielding a series of results summarized in the following corollary.

\begin{corollary} \label{corollary:balanced_homogeneity}
Let $\alpha,\beta \in \MM(\groundspace)$ be two measures of total mass $m(\alpha) = m(\beta) = m$.
\begin{enumerate}
\item If $(f,g)$ is a couple of optimal potentials for the dual problem for the couple $(\alpha,\beta)$, then $(f-\epsilon \log(\lambda), g)$ is optimal for $(\lambda \alpha, \lambda \beta)$.
\item If $\pi$ is an optimal transport plan for the couple $(\alpha,\beta)$, then $\lambda \pi$ is optimal for the couple $(\lambda \alpha,\lambda \beta)$. 
\item We have
\begin{equation}
 \OT_\epsilon(\lambda \alpha,\lambda \beta) = \lambda \cdot \OT_\epsilon(\alpha,\beta) + \epsilon \lambda (\lambda - 1) m^2 - \epsilon \log(\lambda) \lambda m,
\end{equation}
that is, the optimal transport cost is not homogeneous. 
\item We have
\begin{equation}
\Sk_\epsilon(\lambda \alpha,\lambda \beta) = \lambda \cdot \Sk_\epsilon(\alpha,\beta),
\end{equation}
that is, the Sinkhorn divergence is homogeneous in the balanced case. 
\end{enumerate}
\end{corollary}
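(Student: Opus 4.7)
The plan is to derive the four items of the corollary as successive consequences of \cref{lemma:sk_update_balanced}, combined with the characterization of optimal dual potentials as fixed points of the Sinkhorn map (\citep[Prop.~8]{ot:sejourne2019sinkhorn}) and the primal-dual relation \eqref{eq:relation_primal_dual_std}. In the balanced case we have $\varphi = \imath_c$ and $\varphi^* = \id$, so the dual functional simplifies to $J_{(\alpha,\beta)}(f,g) = \braket{f,\alpha} + \braket{g,\beta} - \epsilon \braket{e^{(f\oplus g - c)/\epsilon} - 1, \alpha \otimes \beta}$, and this is the object we will compute with throughout.

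For item~1, I would take an optimal $(f,g)$ for $(\alpha,\beta)$, which by \citep[Prop.~8]{ot:sejourne2019sinkhorn} is a fixed point of the Sinkhorn map. Initializing \eqref{eq:sinkhorn_algorithm_std} at $(f_0,g_0) = (f,g)$ for $(\alpha,\beta)$ yields the constant sequence $(f_t,g_t) = (f,g)$. Applying \cref{lemma:sk_update_balanced}, the sequence generated from the same initialization but for $(\lambda\alpha,\lambda\beta)$ is also constant, equal to $(f - \epsilon\log(\lambda), g)$, and hence is a fixed point of the Sinkhorn map associated to $(\lambda\alpha,\lambda\beta)$; invoking \citep[Prop.~8]{ot:sejourne2019sinkhorn} again gives optimality. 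Item~2 follows by substituting the shifted potentials into the primal-dual formula \eqref{eq:relation_primal_dual_std}: the shift $-\epsilon\log(\lambda)$ produces a factor $\lambda^{-1}$ in the exponential, while the product measure acquires a factor $\lambda^2$, giving $\pi^{(\lambda)} = \lambda \pi$.

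For item~3, I would simply evaluate $J_{(\lambda\alpha,\lambda\beta)}$ at $(f - \epsilon\log(\lambda), g)$, which is optimal by item~1, and compare with $J_{(\alpha,\beta)}(f,g) = \OT_\epsilon(\alpha,\beta)$. The linear terms contribute $\lambda(\braket{f,\alpha} + \braket{g,\beta})$ plus the shift contribution $-\epsilon\lambda m \log(\lambda)$. The exponential term, using $e^{(f-\epsilon\log(\lambda))\oplus g - c)/\epsilon} = \lambda^{-1} e^{(f\oplus g - c)/\epsilon}$ and the factor $\lambda^2$ on the product measure, becomes
\begin{equation*}
-\epsilon \lambda^2 \braket{\lambda^{-1} e^{(f\oplus g - c)/\epsilon} - 1, \alpha\otimes\beta} = -\epsilon \lambda \braket{e^{(f\oplus g - c)/\epsilon} - 1, \alpha\otimes\beta} - \epsilon \lambda m^2 + \epsilon \lambda^2 m^2.
\end{equation*}
Collecting these contributions gives the claimed formula $\OT_\epsilon(\lambda\alpha,\lambda\beta) = \lambda \cdot \OT_\epsilon(\alpha,\beta) + \epsilon \lambda(\lambda-1) m^2 - \epsilon \log(\lambda) \lambda m$.

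Finally, item~4 is a direct algebraic consequence: since $m(\alpha) = m(\beta) = m$, the correction term in item~3 depends only on $m$ and is the same for $\OT_\epsilon(\lambda\alpha,\lambda\beta)$, $\OT_\epsilon(\lambda\alpha,\lambda\alpha)$ and $\OT_\epsilon(\lambda\beta,\lambda\beta)$. The combination in \eqref{eq:skdiv_balanced} with coefficients $1 - \frac{1}{2} - \frac{1}{2} = 0$ annihilates these three identical corrections, leaving $\Sk_\epsilon(\lambda\alpha,\lambda\beta) = \lambda \cdot \Sk_\epsilon(\alpha,\beta)$. There is no real obstacle here; the only subtlety is ensuring that the shifted potentials given by \cref{lemma:sk_update_balanced} are indeed an admissible pair of optimal dual variables for the rescaled problem, which is exactly what the fixed-point characterization provides.
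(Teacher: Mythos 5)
Your proposal is correct and follows essentially the same route as the paper: \cref{lemma:sk_update_balanced}, the fixed-point characterization of optimal potentials, the primal-dual relation \eqref{eq:relation_primal_dual_std}, and a direct evaluation of the dual functional at the shifted pair. The only (minor) stylistic difference is in item~1, where you initialize the Sinkhorn iteration at the optimal pair $(f,g)$ and observe the rescaled sequence is constant from $t=1$ on, whereas the paper passes through convergence of an arbitrary Sinkhorn sequence; both arguments are valid and lead to the same conclusion.
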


Overall, the quantities of interest behave in a reasonable way, in particular the solutions of the primal problem are homogeneous. 
Interestingly, the Sinkhorn divergence cancels the inhomogeneous behavior appearing in $\OT_\epsilon$, giving an additional argument in favor of using this debiased (and homogenized) quantity to compare probability measures using regularized OT.

\begin{remark}\label{remark:warning_numerical}
We warn the reader interested in computational OT that the inhomogeneity appearing in $\OT_\epsilon$ may lead to ill-behavior in numerical applications. 
Indeed, in practice, the Sinkhorn algorithm \eqref{eq:sinkhorn_algorithm_std} does not exactly reach a fixed point and is instead run until some stopping criterion is reached. 
For instance, one may stop the iterations when the relative change in the objective value $v_t \defeq J_{(\alpha,\beta)}(f_t,g_t)$ is smaller than some $\tau > 0$, that is when $\left| \frac{v_{t+1} - v_t}{v_t} \right| < \tau$. 
However, the inhomogeneous behavior in $v_t$ implies that for a given $\tau$, the number of iterations needed to reach the criterion when comparing $\alpha$ and $\beta$ may differ from the one needed when comparing $\lambda \alpha$ and $\lambda \beta$. 
Thus, even though in theory the (optimal) transportation plans of both couples should be the same (up to the scaling factor $\lambda$), the numerical outputs (transport plan, Sinkhorn divergence, etc.) provided by the Sinkhorn algorithm may not satisfy this property. 
\end{remark}

\begin{proof}[Proof of \cref{lemma:sk_update_balanced}]
In this context, $\varphi^* = \id$ and subsequently, $\aprox_{\epsilon,\varphi^*} = \id$, hence the Sinkhorn iterations \eqref{eq:sinkhorn_algorithm_std} simply read
\begin{align*}
	f_{t+1} &= -\epsilon \log \braket{e^{\frac{g_t - c}{\epsilon}},\beta}, \\
	g_{t+1} &= -\epsilon \log \braket{e^{\frac{f_{t+1} - c}{\epsilon}},\alpha}.
\end{align*}
We observe that $f_1^{(\lambda)} = -\epsilon \log \braket{e^{\frac{g_0 - c}{\epsilon}},\lambda \beta} = f_1 - \epsilon \log(\lambda)$. 
Therefore, $g_1^{(\lambda)} = - \epsilon \log \braket{e^{\frac{f_{1} - \epsilon \log(\lambda) - c}{\epsilon}}, \lambda \alpha} = g_{1}$, and thus $f_2^{(\lambda)} = -\epsilon \log \braket{e^{\frac{g_1 - c}{\epsilon}},\lambda \alpha} = -\epsilon \log \braket{e^{\frac{g_1 - c}{\epsilon}},\alpha} - \epsilon \log(\lambda) = f_2 - \epsilon \log(\lambda)$. 
A simple induction gives the conclusion.
\end{proof}

\begin{proof}[Proof of \cref{corollary:balanced_homogeneity}]~
\begin{enumerate}
	\item Since the sequence of potentials $(f_t,g_t)_t$ converges to $(f,g)$ which are optimal for $(\alpha,\beta)$, it follows from \cref{lemma:sk_update_balanced} that $(f_t^{(\lambda)},g_t^{(\lambda)}) \to (f - \epsilon \log(\lambda), g)$ which must also be a fixed point of the Sinkhorn loop, hence a pair of optimal potentials for the couple $(\lambda \alpha,\lambda \beta)$. 
	\item Using the primal-dual relation \eqref{eq:relation_primal_dual_std}, we know that $\pi = e^{\frac{f \oplus g - c}{\epsilon}} \dd \alpha \otimes \beta$ is optimal for the couple $(\alpha,\beta)$. Therefore, from the previous point,
	\[
		\exp\left( \frac{f \oplus g - \epsilon\log(\lambda) - c }{\epsilon} \right) \dd (\lambda \alpha \otimes \lambda \beta) = \lambda \exp\left( \frac{f \oplus g - c}{\epsilon} \right) \dd \alpha \otimes \beta = \lambda \pi
	\]
	is optimal for the couple $(\lambda \alpha,\lambda \beta)$.
	\item Using $(f - \epsilon \log(\lambda),g)$ in the dual relation \eqref{eq:UROT_dual}, we have
\begin{align*} 
\OT_\epsilon(\lambda \alpha,\lambda \beta) &= \lambda \braket{f,\alpha} - \epsilon \log(\lambda) \lambda m(\alpha) + \lambda \braket{g, \beta} - \lambda \epsilon \braket{ e^{ \frac{f \oplus g - c}{\epsilon}} - \lambda , \alpha \otimes \beta}, \\
&= \lambda \left( \braket{f,\alpha} + \braket{g, \beta} - \epsilon \braket{ e^{ \frac{f \oplus g - c}{\epsilon}} -1 + (1 - \lambda) , \alpha \otimes \beta} \right) - \epsilon \log(\lambda) \lambda m(\alpha) \\
&= \lambda \OT_\epsilon(\alpha,\beta) + \epsilon \lambda (\lambda - 1) m(\alpha) m(\beta) - \epsilon \log(\lambda) \lambda m(\alpha) \\
&= \lambda \OT_\epsilon(\alpha,\beta) + \epsilon \lambda (\lambda - 1) m^2 - \epsilon \log(\lambda) \lambda m.
\end{align*}	
	\item The homogeneity of $\Sk_\epsilon$ follows from the fact that the ``inhomogeneous terms'' $+ \epsilon \lambda (\lambda - 1) m^2 - \epsilon \log(\lambda) \lambda m$ cancel in the definition of the balanced Sinkhorn divergence. 
\end{enumerate}
\end{proof}

\subsection{The KL case}

We now propose to derive the same study as the one of \cref{subsec:homogene_balanced} using $\varphi(p) = p \log(p) - p + 1$, that is $D_\varphi = \KL$, a common choice in unbalanced optimal transport to penalize the violation of the marginal constraints. 
In this context, $\varphi^*(q) = e^q - 1$, and $\aprox_{\epsilon,\varphi^*}(p) = \frac{1}{1+\epsilon} p$. 
The following proposition summarizes the important properties of this model as far as homogeneity is concerned.

\begin{proposition}\label{prop:homogeneity_KL_case}
Let $\alpha,\beta \in \MM(\groundspace)$. Then,
\begin{enumerate}
\item If $(f,g)$ is a pair of optimal potentials for the couple $(\alpha,\beta)$, then \\ $\left(f - \frac{\epsilon^2}{(1+\epsilon)^2 - 1} \log(\lambda), g - \frac{\epsilon^2}{(1+\epsilon)^2 - 1} \log(\lambda)\right)$ is optimal for the couple $(\lambda \alpha,\lambda \beta)$. 
\item If $\pi$ is an optimal transport plan for $(\alpha,\beta)$, then $\lambda^{h} \pi$ is optimal for $(\lambda \alpha,\lambda \beta)$, where $h = 2 - \frac{2}{2 + \epsilon}$.
\item The Sinkhorn divergence $\Sk_{\epsilon,\varphi}$ is $h$-homogeneous (while $\OT_{\epsilon,\varphi}$ is not $h$-homogeneous).
\end{enumerate}
\end{proposition}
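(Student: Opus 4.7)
The plan is to mirror the strategy of \cref{lemma:sk_update_balanced} and \cref{corollary:balanced_homogeneity}: first identify how the scaling $(\alpha,\beta) \mapsto (\lambda\alpha,\lambda\beta)$ shifts a pair of optimal dual potentials, then propagate this information to the primal plan via \eqref{eq:relation_primal_dual_std}, to the transport cost via \eqref{eq:UROT_dual}, and finally to the Sinkhorn divergence.

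For Item~1, since $\varphi^*(q) = e^q - 1$ and $\aprox_{\epsilon,\varphi^*}(p) = \frac{p}{1+\epsilon}$, the Sinkhorn iterations \eqref{eq:sinkhorn_algorithm_std} simplify to $f_{t+1} = -\frac{\epsilon}{1+\epsilon}\log \braket{e^{(g_t - c)/\epsilon}, \beta}$ (and analogously for $g_{t+1}$). Rather than tracking the iterates explicitly, which would yield a linear recurrence whose limit must then be computed, I would verify the fixed-point property directly: given an optimal pair $(f,g)$ for $(\alpha,\beta)$, I plug the ansatz $(f+C,g+C)$ into the fixed-point equation for $(\lambda\alpha,\lambda\beta)$ and, after factoring $\lambda e^{C/\epsilon}$ out of each bracket, obtain the linear equation $C = -\frac{\epsilon}{1+\epsilon}\log\lambda - \frac{1}{1+\epsilon} C$. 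Its unique solution is $C = -\frac{\epsilon}{2+\epsilon}\log\lambda$, which equals $-\frac{\epsilon^2}{(1+\epsilon)^2 - 1}\log\lambda$. Item~2 then follows by plugging $(f+C,g+C)$ into \eqref{eq:relation_primal_dual_std}: the exponential contributes a factor $e^{2C/\epsilon} = \lambda^{-2/(2+\epsilon)}$ while the product measure contributes $\lambda^2$, producing $\lambda^{2 - 2/(2+\epsilon)} \pi = \lambda^h \pi$.

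For Item~3, I would substitute the shifted potentials into \eqref{eq:UROT_dual}. Using $-\varphi^*(-(f+C)) = 1 - e^{-C}e^{-f}$ together with the relations $\lambda e^{-C} = \lambda^h$ and $\lambda^2 e^{2C/\epsilon} = \lambda^h$ (both straightforward from the values of $C$ and $h$), one arrives at a decomposition
\begin{equation*}
\OT_{\epsilon,\varphi}(\lambda\alpha,\lambda\beta) = \lambda^h \OT_{\epsilon,\varphi}(\alpha,\beta) + (\lambda - \lambda^h)(m(\alpha) + m(\beta)) + \epsilon(\lambda^2 - \lambda^h) m(\alpha) m(\beta),
\end{equation*}
which features a nontrivial inhomogeneous remainder, establishing that $\OT_{\epsilon,\varphi}$ is not $h$-homogeneous. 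Plugging this identity into \eqref{eq:skdiv_unbalanced_sejourne}, the linear-in-mass remainder cancels identically in the Sinkhorn combination $F(\alpha,\beta) - \tfrac12 F(\alpha,\alpha) - \tfrac12 F(\beta,\beta)$, while the bilinear term combines with the mass bias $\tfrac{\epsilon}{2}\lambda^2 (m(\alpha)-m(\beta))^2$ via the identity $m(\alpha) m(\beta) - \tfrac12 m(\alpha)^2 - \tfrac12 m(\beta)^2 = -\tfrac12 (m(\alpha) - m(\beta))^2$ to produce exactly $\lambda^h \cdot \tfrac{\epsilon}{2}(m(\alpha)-m(\beta))^2$, yielding the $h$-homogeneity of $\Sk_{\epsilon,\varphi}$. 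The main obstacle is this algebraic bookkeeping: it is precisely the value $h = 2 - \frac{2}{2+\epsilon}$, paired with the specific coefficient $\frac{\epsilon}{2}$ of the mass bias in \eqref{eq:skdiv_unbalanced_sejourne}, that makes all inhomogeneous contributions cancel simultaneously, so tracking the various powers of $\lambda$ requires some care.
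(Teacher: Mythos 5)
Your proof is correct, and for Item~1 it takes a cleaner route than the paper's. The paper parallels \cref{lemma:sk_update_balanced} by tracking the full Sinkhorn sequence: it proves by induction that $f_t^{(\lambda)} = f_t - \epsilon u_t \log\lambda$, $g_t^{(\lambda)} = g_t - \epsilon v_t \log\lambda$ with the recurrence $u_{t+1}=T(v_t)$, $v_{t+1}=T(u_{t+1})$, $T(x) = \frac{1-x}{1+\epsilon}$, and then computes the fixed point of $T\circ T$ to obtain the limiting shift $\frac{1}{2+\epsilon}$. You instead bypass the dynamics entirely and verify the fixed-point characterization directly with the symmetric ansatz $(f+C, g+C)$, solving the resulting linear equation $C = -\frac{\epsilon}{1+\epsilon}\log\lambda - \frac{C}{1+\epsilon}$. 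Both rely on the same underlying fact (\citep[Prop.~8]{ot:sejourne2019sinkhorn}: a pair is optimal iff it is a fixed point of the Sinkhorn map), so your shortcut is fully legitimate; one small point you leave implicit is that by the $\alpha\leftrightarrow\beta$, $f\leftrightarrow g$ symmetry of the two Sinkhorn equations, the symmetric ansatz satisfies both simultaneously. What the paper's iterate-tracking buys in exchange for the extra bookkeeping is the stronger statement that the \emph{entire} Sinkhorn trajectory transforms predictably under rescaling, which is what motivates \cref{remark:warning_numerical} about stopping criteria; your argument only establishes the transformation of the fixed point itself. Items~2 and~3 follow the paper's computational path, including the decomposition of $\OT_{\epsilon,\varphi}(\lambda\alpha,\lambda\beta)$ and the observation that the mass-bias term absorbs the bilinear remainder via $m(\alpha)m(\beta)-\tfrac12 m(\alpha)^2-\tfrac12 m(\beta)^2 = -\tfrac12(m(\alpha)-m(\beta))^2$; these are correct.
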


As in the balanced case, the conclusions here are mostly positive: though the optimization problem \eqref{eq:UROT_dual} itself is not ($h$-)homogeneous, the optimal transport plans are $h$-homogeneous, and so is the Sinkhorn divergence (thanks to the addition of the mass bias term!). 

\begin{proof}
	As in the balanced case, we first investigate the behavior of the Sinkhorn algorithm under rescaling of the measures. Let $(f_0, g_0) \in \CC(\groundspace)$, let $(f_t, g_t)_t$ denote the sequence obtained when iterating the Sinkhorn loop for the couple $(\alpha, \beta)$ initialized at $(f_0,g_0)$, and let $(f_t^{(\lambda)}, g_t^{(\lambda)})_t$ be the one obtained for the couple $(\lambda \alpha,\lambda \beta)$ with the same initialization. We prove the following by induction: 
	\begin{align*}
		f^{(\lambda)}_t &= f_t - \epsilon u_t \log(\lambda) \\
		g^{(\lambda)}_t &= g_t - \epsilon v_t \log(\lambda),
	\end{align*}
	where $(u_t, v_t) \in \R \times \R$ are real sequences following the relations $u_{t+1} = T(v_t)$ and $v_{t+1} = T(u_{t+1})$ with $T(x) = \frac{1 - x}{1 + \epsilon}$, with $u_0, v_0 = 0$. Indeed, 
	\begin{align*}
	f^{(\lambda)}_{t+1} &= -\frac{\epsilon}{1 + \epsilon } \log \int e^{\frac{g_t^{(\lambda)} - c}{\epsilon}} \dd \lambda \beta \\
						&= - \frac{\epsilon}{1 + \epsilon} \log \int e^{\frac{g_t - c}{\epsilon}} \lambda^{1 - v_t} \dd \beta \\
						&= f_t - \epsilon \frac{1 - v_t}{1 + \epsilon} \log(\lambda).
	\end{align*}
	A similar computation holds for the second potentials $(g_t)_t$. 
	
	The sequences $(u_t)_t$ and $(v_t)_t$ converge to the fixed point of $T \circ T$, given by
	\[ x = T \circ T(x) \Leftrightarrow x = \frac{x + \epsilon}{(1 + \epsilon)^2} \Leftrightarrow x = \frac{\epsilon}{(1 + \epsilon)^2 - 1} = \frac{1}{2 + \epsilon},\]
	proving the result linking $(f_t^{(\lambda)}, g_t^{(\lambda)})$ and $(f_t, g_t)$.

From this, simple computations prove the claims:
\begin{enumerate}	
\item Follows from the fact that $(f_t,g_t)_t$ converges to a couple of optimal dual potentials for $(\alpha,\beta)$.
\item Follows from the fact that 
\[ \exp\left( \frac{f \oplus g - 2 \frac{\epsilon}{2 + \epsilon}\log(\lambda) - c}{\epsilon} \right) \lambda^2 \dd \alpha \otimes \beta = \exp\left( \frac{f \oplus g - c}{\epsilon} \right) \lambda^{- \frac{2}{2 + \epsilon} } \lambda^2 \dd \alpha \otimes \beta = \lambda^h \pi \]
is an optimal transport plan for the couple $(\lambda \alpha,\lambda \beta)$ (see \eqref{eq:relation_primal_dual_std}). 
\item The shift in the potentials induces a change in the objective value $J_{(\lambda\alpha,\lambda\beta)}$ reading
	\begin{align*}
		&\braket{1 - e^{-f + \frac{\epsilon}{2 + \epsilon} \log(\lambda)} , \lambda \alpha} + \braket{1 - e^{-g + \frac{\epsilon}{2 + \epsilon} \log(\lambda)} , \lambda \beta} - \epsilon \braket{ e^{ \frac{f \oplus g - c - 2 \frac{\epsilon}{2 + \epsilon} \log(\lambda)}{\epsilon} } - 1 , \lambda^2 \alpha \otimes \beta } \\
		=& \lambda^h \braket{1 - e^{-f} ,\alpha} + (\lambda - \lambda^h) m(\alpha) + \lambda^h \braket{1 - e^{-g} ,\beta} + (\lambda - \lambda^h) m(\beta) \\
		&- \epsilon \lambda^h \braket{e^{\frac{f \oplus g - c}{\epsilon}} - 1, \alpha \otimes \beta} - \epsilon (\lambda^h - \lambda^2) m(\alpha) m(\beta) \\
		=&\lambda^h \OT_\epsilon(\alpha,\beta) + (\lambda - \lambda^h)(m(\alpha) + m(\beta)) - \epsilon (\lambda^h - \lambda^2) m(\alpha)m(\beta).
	\end{align*}
Here as well, the non-homogeneous part cancels when considering the Sinkhorn divergence. 
Note that the linear term involving $(m(\alpha) + m(\beta))$ disappears when adding $- \frac{1}{2} \OT_{\epsilon,\varphi}(\alpha,\alpha) - \frac{1}{2} \OT_{\epsilon,\varphi}(\beta,\beta)$, but adding the mass bias term $\frac{\epsilon}{2}\lambda^2(m(\alpha) - m(\beta))^2$ is required to cancel the product term that involves $m(\alpha) m(\beta)$. 
\end{enumerate}
\end{proof}

\begin{remark}\label{remark:aprox_linear}
\cref{prop:homogeneity_KL_case} can be slightly generalized: whenever the anisotropic proximal operator is linear---$\aprox_{\epsilon,\varphi^*}(p) = \kappa p$ for some $\kappa \in (0,1]$---, the optimal transport plans and the Sinkhorn divergence are $h = \frac{2}{1 + \kappa}$-homogeneous. 
Note that this leads to $\varphi^*(q) = \frac{\epsilon}{\left(\frac{1}{\kappa}-1\right)} \left(e^{\frac{q}{\epsilon} \left(\frac{1}{\kappa}-1\right)}-1\right)$, that is equivalent to use $\rho \KL$ as the marginal penalty with $\rho =  \frac{\epsilon}{\left(\frac{1}{\kappa}-1\right)}$. 
We believe that this condition may be necessary as well: a non-linearity in the $\aprox$ operator prevents $h$-homogeneity to occur and the family of divergences $(\rho \KL)_{\rho \in [0,+\infty]}$ is the only one that makes the UROT problem homogeneous. 
\end{remark}

\subsection{Inhomogeneity in general: the TV case}

The two previous case studies, which fall in the setting ``$\aprox$ is linear'' (see \cref{remark:aprox_linear}), may suggest that the apparent inhomogeneity in the formulation of the unbalanced regularized OT problem does not have much practical aftermaths: the structure of optimal transport plans is preserved and the Sinkhorn divergence is $h$-homogeneous. 
As this encompasses both balanced regularized OT \eqref{eq:balanced_reg_OT_primal} and unbalanced OT using a $\KL$-relaxation of the marginal constraints---arguably covering most applications of regularized OT in practice---this may explain why behaviors related to (in)homogeneity did not receive much attention in the OT community so far. 

In this subsection, we give an example for which inhomogeneity (in particular, of the optimal transport plan) occurs: the case of Total Variation (TV). 
This setting corresponds to taking $\varphi(p) = |1 - p|$, yielding $D_\varphi(\pi_1|\alpha) = \TV(\pi_1 - \alpha)$, $\varphi^*(q) = \max(-1, q)$ and $\aprox_{\epsilon,\varphi^*}(p) = \max(-1, \min(p, 1))$. 
The resulting optimization problem is known as \emph{Optimal Partial Transport} \citep{ot:figalli2010optimal}, a particular case of Unbalanced OT where only a fraction of the total mass of the two measures is transported, and we pay a price proportional to the amount of mass in $\alpha, \beta$ that is not transported. 

The Sinkhorn updates used to produce a sequence $(f^{(\lambda)}_t,g^{(\lambda)}_t)_t$ for the couple of measures $(\lambda \alpha,\lambda \beta)$ read
\begin{equation}\label{eq:sinkhorn_alg_TV_std}
\begin{aligned}
	f_{t+1}^{(\lambda)} &= \min \left( \max\left(-1, - \epsilon \log \braket{e^{\frac{g_t^{(\lambda)} - c}{\epsilon}}, \beta} -  \epsilon \log(\lambda) \right), 1\right),\\
	g_{t+1}^{(\lambda)} &= \min \left( \max \left(-1, -\epsilon \log \braket{e^{\frac{f_{t+1}^{(\lambda)} - c}{\epsilon}}, \alpha} - \epsilon \log(\lambda) \right), 1 \right).
\end{aligned}
\end{equation}
Here, $\aprox_{\epsilon,\varphi^*}$ exhibits sharp changes of behavior when its argument get higher than $1$ (or lower than $-1$). 
This is the source of an inhomogeneous behavior: when the scaling factor $\lambda \to \infty$ (or $\to 0$), this affects the Sinkhorn updates and by consequence the returned (optimal) potentials, transport plan, and Sinkhorn divergence. 

\paragraph{Numerical illustration.} To empirically illustrate the possible inhomogeneous behavior of $\Sk_{\epsilon,\TV}$,  we propose the following experiment. 
We randomly sample two measures $\alpha,\beta$ with $n=5$ and $m=7$ points, respectively and random (non-negative) weights on their support distributed uniformly between $0$ and $1$. 
We then compute the Sinkhorn divergence $\Sk_{\epsilon,\TV}(\lambda \alpha,\lambda\beta)$ for $\lambda \in [1, 100]$ from the optimal dual potentials obtained by iterating \eqref{eq:sinkhorn_alg_TV_std} and the corresponding transport plans through the relation \eqref{eq:relation_primal_dual_std}. 
\cref{fig:TV_std} showcases the dependence of the result on $\lambda$. 
The plot (a) shows that $\Sk_{\epsilon,\TV}$ cannot be $1$-homogeneous. 
If $\Sk_{\epsilon,\TV}$ was $h$-homogeneous for some $h$, one would expect that $\log(\Sk_{\epsilon,\TV}(\lambda\alpha,\lambda\beta)) = h \log(\lambda) + \log(\Sk_{\epsilon,\TV}(\alpha,\beta))$, that would yield a line of slope $h$ in log-log scale. 
Plot (b) in \cref{fig:TV_std} shows that this does not hold overall: a slope break occurs around $\log(\lambda)\sim 2.5$, as a consequence of the non-linearity in $\aprox_{\epsilon,\TV}$. 
This reflects in structural changes in the resulting transport plans as illustrated in the subplots (c,d). 
Computations are run with $\epsilon = 1$. 

\begin{figure}
	\includegraphics[width=\textwidth]{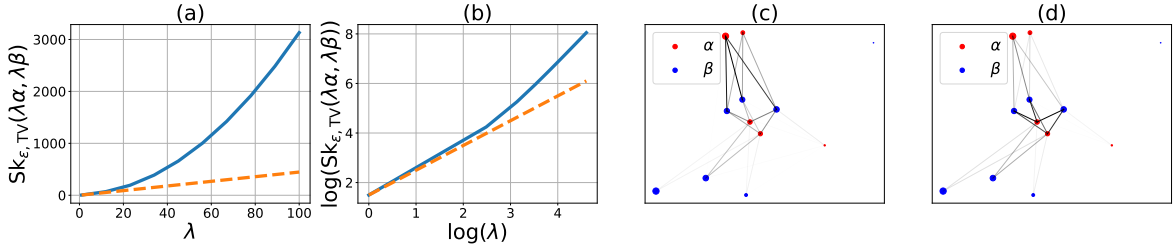}
	\caption{\textbf{Inhomogeneity when using TV as marginal divergence.} \emph{(a)} The Sinkhorn divergence between $\lambda \alpha$ and $\lambda \beta$ for $\lambda \in [1,100]$ for the standard UROT model. Dashed line correspond to the homogeneous behavior $\lambda \cdot \Sk_{\epsilon,\TV}(\alpha,\beta)$. \emph{(b)} The same curve in log-log scale. \emph{(c,d)} The optimal transport plans for $\lambda = 1$ and $\lambda = 100$, respectively. Width of the lines linking $x$ in $\alpha$ to $y$ in $\beta$ are proportional to $\dd\pi(x,y)$. The transport plans are not proportional to each other, showcasing the structural change in the interpolation when rescaling the measures.}
	\label{fig:TV_std}
\end{figure}

\section{An Homogeneous model of Unbalanced Regularized Optimal Transport (HUROT)}
\label{sec:HUROT}

In this section, by slightly changing the entropic regularization term appearing in \eqref{eq:UROT_primal}, we introduce a model of unbalanced regularized OT that presents the advantage of being homogeneous in a very broad setting. 
Let fix $\alpha,\beta \in \MM(\groundspace)$ and assume for now that they have positive total masses: $m(\alpha) > 0,\ m(\beta) > 0$, that is belong to $\MM(\groundspace)\backslash\{0\}$. 
Let also $m_a(\alpha,\beta) \defeq \frac{1}{2}(m(\alpha) + m(\beta))$, $m_g(\alpha,\beta) \defeq \sqrt{m(\alpha)m(\beta)}$ and $m_h(\alpha,\beta) \defeq 2 \left( \frac{1}{m(\alpha)} + \frac{1}{m(\beta)} \right)^{-1}$ denote the arithmetic, geometric and harmonic mean of $m(\alpha)$ and $m(\beta)$, respectively. 
When it is clear from the context, we will simply write $m_a$, $m_g$ and $m_h$ instead.

\begin{definition} For $\pi \in \MM(\groundspace\times \groundspace)$ and $\alpha,\beta \in \MM(\groundspace)\backslash\{0\}$, introduce 
\begin{equation}\label{eq:HOT_regularization_term}
R(\pi|\alpha,\beta) \defeq \frac{1}{2} \left( \KL\left(\pi | \frac{\alpha}{m(\alpha)} \otimes \beta\right) + \KL\left(\pi | \alpha \otimes \frac{\beta}{m(\beta)}\right) \right). 
\end{equation}
The homogeneous unbalanced regularized optimal transport (HUROT) problem between $\alpha$ and $\beta$ is defined as
\begin{equation}\label{eq:HOT_primal}
	\OT_{\epsilon,\varphi}^{[H]}(\alpha,\beta) \defeq \inf_\pi \braket{c,\pi} + D_\varphi(\pi_1 | \alpha) + D_\varphi(\pi_2|\beta) + \epsilon R(\pi | \alpha,\beta).
\end{equation}
\end{definition}

\paragraph{Interpretation.} As detailed in \cref{sec:inhomogeneity_std}, the standard entropic regularization term $\epsilon \KL(\pi | \alpha \otimes \beta)$ introduces an inhomogeneous behavior in the (unbalanced) OT problem. 
When $\alpha, \beta$ are probability measures, using this regularization term is motivated by the fact that the reference measures $\alpha \otimes \beta$ belongs to $\Pi(\alpha,\beta)$, so that the solution of the regularized (balanced) problem \eqref{eq:balanced_reg_OT_primal} interpolates between the exact optimal transport plan ($\epsilon = 0$) and this ``trivial'' one ($\epsilon \to \infty$). 
However, when $\alpha$ and $\beta$ are not probability measures (even if they have the same total masses), $\alpha \otimes \beta \not\in \Pi(\alpha,\beta)$ (its first and second marginals are $m(\beta)\alpha$ and $m(\alpha)\beta$, respectively) and actually, if $m(\alpha) \neq m(\beta)$, there is no measures with $\alpha,\beta$ as marginals. 
The regularization term \eqref{eq:HOT_regularization_term} can be seen as the average of two entropic regularization terms, one whose reference measure has $\beta$ as second marginal, and one whose reference measure has $\alpha$ as first marginal.

\begin{proposition}[Dual formulation]\label{prop:HOT_dual_formulation}
One has:
\begin{equation}\label{eq:HOT_dual}
	\OT_{\epsilon,\varphi}^{[H]}(\alpha,\beta) = \sup_{f,g \in \CC(\groundspace)} \braket{-\varphi^*(-f) , \alpha} + \braket{-\varphi^*(-g), \beta} - \epsilon \braket{ \frac{e^{\frac{f \oplus g - c}{\epsilon}}}{m_g} -\frac{1}{m_h} , \alpha \otimes \beta }.
\end{equation}
Furthermore, if $f,g$ is optimal for \eqref{eq:HOT_dual}, then 
\begin{equation}\label{eq:HOT_primal_dual_relation}
	\pi \defeq \exp\left(\frac{f \oplus g - c}{\epsilon} \right) \frac{\alpha \otimes \beta}{m_g(\alpha,\beta)}
\end{equation}
is an optimal transport plan for the problem \eqref{eq:HOT_primal}.
\end{proposition}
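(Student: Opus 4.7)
The plan is to show that $R(\pi|\alpha,\beta)$ differs from the single generalized Kullback--Leibler term $\KL(\pi | (\alpha \otimes \beta)/m_g)$ only by an additive constant independent of $\pi$. Once this is established, \eqref{eq:HOT_primal} becomes a standard UROT problem \eqref{eq:UROT_primal} with the reference measure $\alpha \otimes \beta$ replaced by $(\alpha \otimes \beta)/m_g$, modulo an additive constant; the duality \eqref{eq:UROT_dual} and the primal-dual relation \eqref{eq:relation_primal_dual_std} then transcribe directly to the HUROT setting.

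For the first step, I would use the generalized KL convention $\KL(\mu|\nu) = \int \log(d\mu/d\nu)\, d\mu - m(\mu) + m(\nu)$ (consistent with the $\varphi$-divergence induced by $\varphi(p) = p\log p - p + 1$) together with the identity $\alpha/m(\alpha) \otimes \beta = (\alpha \otimes \beta)/m(\alpha)$, which yields $d\pi/d(\alpha/m(\alpha) \otimes \beta) = m(\alpha)\, d\pi/d(\alpha \otimes \beta)$. Averaging the two symmetric KL terms in \eqref{eq:HOT_regularization_term} and using $\tfrac{1}{2}\log(m(\alpha)m(\beta)) = \log(m_g)$ and $\tfrac{1}{2}(m(\alpha)+m(\beta)) = m_a$ gives
\[ R(\pi|\alpha,\beta) = \log(m_g)\, m(\pi) + \int \log\left(\frac{d\pi}{d(\alpha \otimes \beta)}\right) d\pi - m(\pi) + m_a. \]
Comparing this with $\KL(\pi | (\alpha \otimes \beta)/m_g) = \log(m_g)\, m(\pi) + \int \log(d\pi/d(\alpha\otimes\beta))\, d\pi - m(\pi) + m_g$ (the last term coming from the total mass $m(\alpha)m(\beta)/m_g = m_g$ of the reference) produces $R(\pi|\alpha,\beta) = \KL(\pi | (\alpha \otimes \beta)/m_g) + (m_a - m_g)$.

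For the second step, invoking the standard UROT duality \eqref{eq:UROT_dual} with the reference measure $(\alpha \otimes \beta)/m_g$ (the marginal $D_\varphi$ terms being unaffected) and adding the constant $\epsilon(m_a - m_g)$ yields the candidate dual $\braket{-\varphi^*(-f),\alpha} + \braket{-\varphi^*(-g),\beta} - \epsilon \braket{e^{(f\oplus g - c)/\epsilon}/m_g,\, \alpha \otimes \beta} + \epsilon m_g + \epsilon(m_a - m_g)$, where the $+\epsilon m_g$ contribution arises from $-\epsilon\braket{-1/m_g, \alpha \otimes \beta} = \epsilon m(\alpha)m(\beta)/m_g = \epsilon m_g$. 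The two outer constants collapse to $+\epsilon m_a$, and the identity $m(\alpha)m(\beta)/m_h = m_a$ (immediate from the definitions of $m_a$ and $m_h$) allows rewriting this as $\epsilon \braket{1/m_h, \alpha \otimes \beta}$, which reabsorbs into the pairing to match \eqref{eq:HOT_dual} exactly. The primal-dual relation \eqref{eq:HOT_primal_dual_relation} then follows directly from \eqref{eq:relation_primal_dual_std} applied with the rescaled reference measure, since the additive constant $\epsilon(m_a - m_g)$ plays no role in the first-order optimality condition in $\pi$. The only genuine obstacle is careful bookkeeping of the various normalization factors ($m(\alpha)$, $m(\beta)$, $m_g$, $m_h$, $m_a$); once $R$ is recognized as a KL divergence up to an additive constant, the remainder of the proof is a direct appeal to results already cited in \cref{subsec:background_UROT}.
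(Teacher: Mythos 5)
Your proof is correct, and it is organized a bit differently from the paper's. You isolate the identity $R(\pi\,|\,\alpha,\beta) = \KL\bigl(\pi \,|\, (\alpha\otimes\beta)/m_g\bigr) + (m_a - m_g)$ as the key lemma, and then deduce both the dual formula and the primal-dual relation by invoking the known UROT duality with the rescaled reference measure $(\alpha\otimes\beta)/m_g$. The paper instead expands $R$ directly in terms of $\xi = \dd\pi/\dd(\alpha\otimes\beta)$, writes it as $\epsilon\braket{\xi\log\xi - \xi + \log(m_g)\,\xi + 1/m_h,\ \alpha\otimes\beta}$, and then redoes the Fenchel--Rockafellar conjugation explicitly, never stating the ``$R$ equals a KL plus a constant'' reformulation inside the proof (that observation only appears informally in a later remark). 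The two routes are computationally equivalent, but yours is more modular: it makes transparent that HUROT is literally the standard UROT problem with reference $\gamma = (\alpha\otimes\beta)/m_g$ plus an additive constant, so every structural property of the standard model imports immediately. One small caveat: you should cite the duality theorem in the form that allows an arbitrary reference measure $\gamma$ (e.g.\ Chizat et al.\ or the general statement underlying \eqref{eq:UROT_dual}), since \eqref{eq:UROT_dual} is written in the paper with the specific reference $\alpha\otimes\beta$; this is a bibliographic point rather than a gap, as the required general duality is indeed proved in the works cited in \cref{subsec:background_UROT}.
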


The proof is essentially a variation of the standard proofs of duality in regularized optimal transport which rely on an application of the Fenchel-Rockafellar theorem, see for instance \citep[\S 3.1]{ot:sejourne2019sinkhorn}, \citep[Prop.~4]{ot:genevay2019phd}, \citep[Thm.~1]{ot:chizat2018scaling}. 
The computations that change (due to the modified entropic regularization term) are detailed in the appendix. 

We now state the homogeneity of the HUROT model. 

\begin{proposition}\label{prop:HOT_is_homogeneous}
	$\OT_{\epsilon,\varphi}^{[H]}$ is $1$-homogeneous. 
	Furthermore, if $(f,g)$ is a pair of optimal dual potentials for the couple $(\alpha,\beta)$, then it is also optimal for the couple $(\lambda \alpha,\lambda \beta)$. 
\end{proposition}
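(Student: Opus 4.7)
The plan is to work directly in the dual formulation \eqref{eq:HOT_dual} from \cref{prop:HOT_dual_formulation}, which has a particularly transparent scaling behavior, and then transfer the conclusion to the primal. The key observation is that the regularization term in \eqref{eq:HOT_regularization_term} was engineered precisely so that the dual objective is $1$-homogeneous in $(\alpha,\beta)$ \emph{pointwise in $(f,g)$}, not only at the optimum.

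First I would record the scaling of the means: $m(\lambda\alpha) = \lambda m(\alpha)$ and $m(\lambda\beta) = \lambda m(\beta)$, so that $m_g(\lambda\alpha,\lambda\beta) = \lambda m_g(\alpha,\beta)$ and $m_h(\lambda\alpha,\lambda\beta) = \lambda m_h(\alpha,\beta)$, while $(\lambda\alpha) \otimes (\lambda\beta) = \lambda^2 (\alpha \otimes \beta)$. Denoting by $J^{[H]}_{(\alpha,\beta)}(f,g)$ the right-hand side of \eqref{eq:HOT_dual} before taking the supremum, I would then substitute $(\lambda\alpha,\lambda\beta)$ for $(\alpha,\beta)$ and observe that the first two terms $\braket{-\varphi^*(-f),\cdot}$ and $\braket{-\varphi^*(-g),\cdot}$ each pick up a factor of $\lambda$ by linearity of the duality pairing. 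For the entropic term, the factors $1/m_g$ and $1/m_h$ each contribute a $\lambda^{-1}$, which combines with the $\lambda^2$ from $\lambda\alpha \otimes \lambda\beta$ to yield exactly one factor of $\lambda$:
\begin{equation*}
-\epsilon \braket{\frac{e^{(f \oplus g - c)/\epsilon}}{\lambda m_g} - \frac{1}{\lambda m_h}, \lambda^2 \alpha \otimes \beta} = -\epsilon \lambda \braket{\frac{e^{(f \oplus g - c)/\epsilon}}{m_g} - \frac{1}{m_h}, \alpha \otimes \beta}.
\end{equation*}
Adding up, $J^{[H]}_{(\lambda\alpha,\lambda\beta)}(f,g) = \lambda\, J^{[H]}_{(\alpha,\beta)}(f,g)$ for every admissible $(f,g)$.

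Since this identity holds pointwise with $\lambda > 0$, taking the supremum over $(f,g) \in \CC(\groundspace)^2$ on both sides immediately gives $\OT^{[H]}_{\epsilon,\varphi}(\lambda\alpha,\lambda\beta) = \lambda \cdot \OT^{[H]}_{\epsilon,\varphi}(\alpha,\beta)$, which is the claimed $1$-homogeneity. Moreover, because $J^{[H]}_{(\lambda\alpha,\lambda\beta)}$ and $J^{[H]}_{(\alpha,\beta)}$ differ only by a positive multiplicative constant, their argmax sets coincide: any optimal pair of dual potentials for $(\alpha,\beta)$ is optimal for $(\lambda\alpha,\lambda\beta)$ as well.

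There is no genuine obstacle here — the whole design of the regularization term $R(\pi|\alpha,\beta)$ in \eqref{eq:HOT_regularization_term} is reverse-engineered to make the reference measures $\tfrac{\alpha}{m(\alpha)} \otimes \beta$ and $\alpha \otimes \tfrac{\beta}{m(\beta)}$ scale linearly in $(\alpha,\beta)$ rather than quadratically, which is exactly what gets rid of the $\lambda^2$ term that spoiled homogeneity in the standard UROT model (see \cref{sec:inhomogeneity_std}). The only mild care point is to verify that both means scale by the same factor $\lambda$, which one checks in one line. The proof therefore reduces to the dual computation sketched above, together with an appeal to \cref{prop:HOT_dual_formulation} for the validity of the dual formula in the first place.
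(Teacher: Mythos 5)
Your proof is correct and follows essentially the same route as the paper: both introduce the dual objective $J^{[H]}_{(\alpha,\beta)}(f,g)$, use $m_g(\lambda\alpha,\lambda\beta)=\lambda m_g(\alpha,\beta)$ and $m_h(\lambda\alpha,\lambda\beta)=\lambda m_h(\alpha,\beta)$ to get the pointwise identity $J^{[H]}_{(\lambda\alpha,\lambda\beta)}(f,g)=\lambda J^{[H]}_{(\alpha,\beta)}(f,g)$, and then conclude by taking suprema. You merely spell out the cancellation of the $\lambda^2$ factor against the two $\lambda^{-1}$'s more explicitly than the paper does.
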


\begin{proof}
	The proof simply follows from introducing
	\[ J_{(\alpha,\beta)}^{[H]} (f,g) \defeq \braket{-\varphi^*(-f) , \alpha} + \braket{-\varphi^*(-g), \beta} - \epsilon \braket{ \frac{e^{\frac{f \oplus g - c}{\epsilon}}}{m_g(\alpha,\beta)} -\frac{1}{m_h(\alpha,\beta)} , \alpha \otimes \beta } \]
	and observing that for any $\lambda > 0$, since $m_g(\lambda \alpha,\lambda \beta) = \lambda m_g(\alpha,\beta)$ and $m_h(\lambda \alpha ,\lambda \beta) = \lambda m_h(\alpha,\beta)$, we have
	\[ J^{[H]}_{(\lambda \alpha,\lambda \beta)}(f,g) = \lambda \cdot J_{(\alpha,\beta)}^{[H]}(f,g), \]
	yielding the conclusion.
\end{proof}

\begin{corollary}
	If $\pi$ is an optimal transport plan for the HUROT model \eqref{eq:HOT_primal} for the couple of measures $(\alpha,\beta)$, then $\lambda \pi$ is optimal for the couple $(\lambda \alpha,\lambda\beta)$. 
\end{corollary}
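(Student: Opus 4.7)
The plan is to combine \cref{prop:HOT_is_homogeneous} with the primal-dual relation \eqref{eq:HOT_primal_dual_relation} and exploit the $1$-homogeneity of the geometric mean $m_g$.

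First, I would invoke \cref{prop:HOT_is_homogeneous}: fix a pair $(f,g)$ of optimal dual potentials for the couple $(\alpha,\beta)$; by that proposition, the same pair $(f,g)$ is also optimal for the couple $(\lambda\alpha,\lambda\beta)$. Next, I would apply the primal-dual relation \eqref{eq:HOT_primal_dual_relation} on both sides. For $(\alpha,\beta)$, it yields an optimal plan
\[
\pi = \exp\!\left(\frac{f\oplus g - c}{\epsilon}\right)\frac{\alpha\otimes\beta}{m_g(\alpha,\beta)},
\]
while for $(\lambda\alpha,\lambda\beta)$, the same relation applied with the same potentials yields the optimal plan
\[
\pi^{(\lambda)} = \exp\!\left(\frac{f\oplus g - c}{\epsilon}\right)\frac{(\lambda\alpha)\otimes(\lambda\beta)}{m_g(\lambda\alpha,\lambda\beta)}.
\]

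Finally, I would use the fact that $m_g$ is $1$-homogeneous, i.e.\ $m_g(\lambda\alpha,\lambda\beta) = \sqrt{\lambda m(\alpha)\cdot\lambda m(\beta)} = \lambda m_g(\alpha,\beta)$, together with $(\lambda\alpha)\otimes(\lambda\beta) = \lambda^2(\alpha\otimes\beta)$, to compute
\[
\pi^{(\lambda)} = \exp\!\left(\frac{f\oplus g - c}{\epsilon}\right)\frac{\lambda^2\,\alpha\otimes\beta}{\lambda\,m_g(\alpha,\beta)} = \lambda\,\pi.
\]
This identifies $\lambda\pi$ as a valid output of the primal-dual relation for the rescaled couple, hence as an optimal transport plan for $(\lambda\alpha,\lambda\beta)$.

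There is no real obstacle here: the result is essentially a one-line consequence of the homogeneity proposition once one notices that the normalization factor $m_g$ scales linearly in $\lambda$, and the optimality statement in \eqref{eq:HOT_primal_dual_relation} lets us transfer from dual optimizers to primal ones. A minor point worth mentioning is that the argument presupposes $m(\alpha), m(\beta) > 0$ so that $m_g$ is well-defined and nonzero, which is already the standing assumption of the section; the degenerate case $\lambda = 0$ is trivial.
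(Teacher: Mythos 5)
Your argument is correct and matches the paper's own proof: the paper likewise combines \cref{prop:HOT_is_homogeneous} (same dual potentials are optimal for $(\lambda\alpha,\lambda\beta)$) with the primal-dual relation \eqref{eq:HOT_primal_dual_relation} and the identity $\frac{\lambda\alpha\otimes\lambda\beta}{m_g(\lambda\alpha,\lambda\beta)} = \lambda\frac{\alpha\otimes\beta}{m_g(\alpha,\beta)}$. You have simply spelled out the intermediate computation more explicitly; no gap.
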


\begin{proof}
This follows from the primal-dual relationship \eqref{eq:HOT_primal_dual_relation} and the fact that $\frac{\lambda \alpha \otimes \lambda \beta}{m_g(\lambda \alpha,\lambda \beta)} = \lambda \frac{\alpha \otimes \beta}{m_g(\alpha,\beta)}$.
\end{proof}

As for the standard model, we can derive first order conditions on the dual that read
\begin{equation}
	\begin{aligned}
		f(x) &= -\aprox_{\epsilon,\varphi^*} \left( \epsilon \log \braket{e^{\frac{g - c(x,\cdot)}{\epsilon}}, \frac{\alpha}{m_g(\alpha,\beta)}} \right), \qquad \alpha-\text{a.e.}\\
		g(y) &= -\aprox_{\epsilon,\varphi^*} \left( \epsilon \log \braket{e^{\frac{f - c(\cdot,y)}{\epsilon}}, \frac{\beta}{m_g(\alpha,\beta)}} \right), \qquad \beta-\text{a.e.},
	\end{aligned}
\end{equation}
yielding the Homogeneous Sinkhorn algorithm:
\begin{equation}\label{eq:HUROT_Sinkhorn_algo}
	\begin{aligned}
		f_{t+1} &= -\aprox_{\epsilon,\varphi^*} \left( \epsilon \log \braket{e^{\frac{g_t - c}{\epsilon}}, \frac{\alpha}{m_g(\alpha,\beta)}} \right),\\
		g_{t+1} &= -\aprox_{\epsilon,\varphi^*} \left( \epsilon \log \braket{e^{\frac{f_{t+1} - c}{\epsilon}}, \frac{\beta}{m_g(\alpha,\beta)}} \right).
	\end{aligned}
\end{equation}
This iterative algorithm can be seen as the standard Sinkhorn algorithm \eqref{eq:sinkhorn_algorithm_std} applied to the renormalized measures $\left( \frac{\alpha}{m_g(\alpha,\beta)},\frac{\beta}{m_g(\alpha,\beta)}, \right)$ and benefits from all the properties proved in \citep{ot:sejourne2019sinkhorn}. 
In particular, it converges toward a fixed point $(f,g)$ that is an optimal couple of potentials for the HUROT model. Numerically, optimal potentials can thus be directly obtained using dedicated software such as \texttt{POT} \citep{flamary2021pot} without requiring further development, and can be re-injected in the objective function $J_{(\alpha,\beta)}^{[H]}$ to get the corresponding homogeneous transport cost $\OT_{\epsilon,\varphi}^{[H]}(\alpha,\beta)$.

\begin{proposition}
	Let $(f_0,g_0) \in \CC(\groundspace)$, $\alpha,\beta$ be two non-zero measures, and $\lambda > 0$. The sequence $(f^{(\lambda)}_t,g^{(\lambda)}_t)_t$ produced by \eqref{eq:HUROT_Sinkhorn_algo} for the couple of measures $(\lambda \alpha,\lambda \beta)$ initialized at $(f_0,g_0)$ is independent of $\lambda$. 
\end{proposition}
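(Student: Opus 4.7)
The proof should be a straightforward direct computation followed by an induction on $t$, because the way the renormalization by $m_g$ is baked into the HUROT Sinkhorn iteration \eqref{eq:HUROT_Sinkhorn_algo} is precisely designed to cancel the scaling.

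The plan is as follows. First, I would record the key scale-invariance identity: since $m_g$ is $1$-homogeneous, namely
\[ m_g(\lambda \alpha, \lambda \beta) = \sqrt{(\lambda m(\alpha))(\lambda m(\beta))} = \lambda \, m_g(\alpha,\beta), \]
the renormalized measures appearing inside the Sinkhorn updates are invariant under a common rescaling:
\[ \frac{\lambda \alpha}{m_g(\lambda \alpha,\lambda \beta)} = \frac{\alpha}{m_g(\alpha,\beta)}, \qquad \frac{\lambda \beta}{m_g(\lambda \alpha,\lambda \beta)} = \frac{\beta}{m_g(\alpha,\beta)}. \]

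Then I would proceed by induction on $t$. For the base case $t=0$, both sequences start at $(f_0, g_0)$ by hypothesis. For the inductive step, assume $(f_t^{(\lambda)}, g_t^{(\lambda)}) = (f_t, g_t)$. Writing down \eqref{eq:HUROT_Sinkhorn_algo} for the couple $(\lambda \alpha, \lambda \beta)$ and applying the identity above, the argument of $\aprox_{\epsilon,\varphi^*}$ becomes
\[ \epsilon \log \braket{e^{\frac{g_t^{(\lambda)} - c}{\epsilon}}, \tfrac{\lambda \alpha}{m_g(\lambda \alpha,\lambda \beta)}} = \epsilon \log \braket{e^{\frac{g_t - c}{\epsilon}}, \tfrac{\alpha}{m_g(\alpha,\beta)}}, \]
so that $f_{t+1}^{(\lambda)} = f_{t+1}$. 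The exact same argument, now using the updated potential and the second line of \eqref{eq:HUROT_Sinkhorn_algo}, gives $g_{t+1}^{(\lambda)} = g_{t+1}$, completing the induction.

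There is no real obstacle here; the statement is essentially a one-line consequence of the $1$-homogeneity of $m_g$ built into the regularization term $R(\pi|\alpha,\beta)$. I would close by noting the interpretation: this is exactly why running the HUROT Sinkhorn algorithm \eqref{eq:HUROT_Sinkhorn_algo} amounts, in practice, to running the standard Sinkhorn algorithm on the scale-free measures $\alpha/m_g$ and $\beta/m_g$, and why the resulting fixed point (hence the optimal potentials) does not depend on $\lambda$, giving a second derivation of \cref{prop:HOT_is_homogeneous} at the level of the algorithmic iterates.
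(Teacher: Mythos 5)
Your proof is correct and takes essentially the same approach as the paper, which simply observes that the result is an immediate consequence of the scale-invariance identity $\frac{\lambda\alpha}{m_g(\lambda\alpha,\lambda\beta)} = \frac{\alpha}{m_g(\alpha,\beta)}$ plugged into \eqref{eq:HUROT_Sinkhorn_algo}; you have just spelled out the induction on $t$ explicitly.
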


\begin{proof}
It is an immediate consequence of \eqref{eq:HUROT_Sinkhorn_algo} and the fact that $\frac{\lambda \alpha}{m_g(\lambda \alpha,\lambda \beta)} = \frac{\alpha}{m_g(\alpha,\beta)}$.
\end{proof}

\begin{proposition}[Continuity of the HUROT model]
	Let $\alpha,\beta \in \MM(\groundspace)\backslash\{0\}$. 
	Consider two sequences $(\alpha_n)_n,(\beta_n)_n$ in $\MM(\groundspace)\backslash\{0\}$ that weakly converge toward $\alpha$ and $\beta$, respectively. 
	
	Then 
	\[ \OT_{\epsilon,\varphi}^{[H]}(\alpha_n,\beta_n) \to \OT_{\epsilon,\varphi}^{[H]}(\alpha,\beta).\]
\end{proposition}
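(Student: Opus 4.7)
The plan is to reduce the continuity of $\OT_{\epsilon,\varphi}^{[H]}$ to the continuity of the standard $\OT_{\epsilon,\varphi}$ established in \citep{ot:sejourne2019sinkhorn}, via a scaling identity. The observation hinted at by the Homogeneous Sinkhorn iterations \eqref{eq:HUROT_Sinkhorn_algo}---which are precisely the standard Sinkhorn updates applied to the renormalized measures $\alpha/m_g(\alpha,\beta)$ and $\beta/m_g(\alpha,\beta)$---suggests that the HUROT value should be essentially the standard UROT value on these renormalized measures, up to an explicit scaling factor and mass-dependent constant. The first step is therefore to establish the identity
\begin{equation*}
	\OT_{\epsilon,\varphi}^{[H]}(\alpha,\beta) \;=\; m_g(\alpha,\beta)\cdot\OT_{\epsilon,\varphi}\!\left(\tfrac{\alpha}{m_g(\alpha,\beta)},\tfrac{\beta}{m_g(\alpha,\beta)}\right) \;+\; \epsilon\bigl(m_a(\alpha,\beta) - m_g(\alpha,\beta)\bigr).
\end{equation*}

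To derive this identity I would compare, for arbitrary dual potentials $(f,g) \in \CC(\groundspace)^2$, the HUROT objective $J^{[H]}_{(\alpha,\beta)}(f,g)$ from \eqref{eq:HOT_dual} with the standard dual $J^{\mathrm{std}}_{(\alpha/m_g,\beta/m_g)}(f,g)$ from \eqref{eq:UROT_dual}. Pulling out the $1/m_g$ factors from each renormalized measure shows that the marginal penalty terms and the exponential coupling term of the standard dual, once multiplied by $m_g$, exactly reproduce the corresponding terms in $J^{[H]}_{(\alpha,\beta)}(f,g)$. The only remaining $(f,g)$-independent discrepancy comes from the constant terms: the standard dual contributes $\epsilon\braket{1,\alpha'\otimes\beta'} = \epsilon$ since $m(\alpha')m(\beta')=1$, so multiplied by $m_g$ it gives $\epsilon m_g$, while the HUROT dual contributes $(\epsilon/m_h)\,m(\alpha)m(\beta) = \epsilon m_a$ (using $m_h = m(\alpha)m(\beta)/m_a$). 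The difference of these two constants is precisely $\epsilon(m_a - m_g)$, and taking the supremum over $(f,g)$ on both sides yields the identity.

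With the identity in hand, the conclusion follows from three standard facts. First, weak convergence $\alpha_n \cvweak \alpha$ and $\beta_n \cvweak \beta$ implies $m(\alpha_n) \to m(\alpha) > 0$ and $m(\beta_n) \to m(\beta) > 0$ by testing against the bounded continuous function $\mathbf{1}_\groundspace$, hence $m_g(\alpha_n,\beta_n) \to m_g(\alpha,\beta) > 0$ and $m_a(\alpha_n,\beta_n) \to m_a(\alpha,\beta)$ by continuity of the means, with $m_g(\alpha_n,\beta_n)$ bounded away from zero for $n$ large enough. Second, weak convergence is preserved under multiplication by a sequence of positive scalars converging to a positive limit, so $\alpha_n/m_g(\alpha_n,\beta_n) \cvweak \alpha/m_g(\alpha,\beta)$ and similarly for $\beta_n$. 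Third, $\OT_{\epsilon,\varphi}$ is continuous in the weak topology on sets of measures of uniformly bounded total mass (which the sequences $(\alpha_n/m_g(\alpha_n,\beta_n))_n$ and $(\beta_n/m_g(\alpha_n,\beta_n))_n$ clearly satisfy), as established in \citep{ot:sejourne2019sinkhorn}. Injecting the three convergences into the identity yields the desired $\OT_{\epsilon,\varphi}^{[H]}(\alpha_n,\beta_n) \to \OT_{\epsilon,\varphi}^{[H]}(\alpha,\beta)$. The main technical point---and essentially the only place where care is needed---is ensuring that the renormalization is well-defined and stable along the whole sequence, which is precisely guaranteed by the assumption $\alpha,\beta \in \MM(\groundspace)\setminus\{0\}$ combined with the mass convergence.
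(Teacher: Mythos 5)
Your proof is correct, and it takes a route that is related to but cleaner than the paper's. Both proofs hinge on the same underlying insight — that the HUROT problem is the standard UROT problem applied to the renormalized measures $\alpha/m_g(\alpha,\beta)$ and $\beta/m_g(\alpha,\beta)$ — but they exploit it differently. The paper works at the level of optimal dual potentials: it observes that HUROT optimal potentials for $(\alpha_n,\beta_n)$ coincide with standard UROT optimal potentials for the renormalized pair, then invokes \citep[Prop.~10 and Thm.~2]{ot:sejourne2019sinkhorn} to obtain uniform convergence of the potentials, and finally concludes via continuity of the objective in $(\alpha,\beta,f,g)$. You instead derive an explicit value identity
\[
\OT_{\epsilon,\varphi}^{[H]}(\alpha,\beta) = m_g(\alpha,\beta)\cdot\OT_{\epsilon,\varphi}\!\left(\tfrac{\alpha}{m_g},\tfrac{\beta}{m_g}\right) + \epsilon\bigl(m_a(\alpha,\beta)-m_g(\alpha,\beta)\bigr),
\]
obtained by comparing the two dual objectives pointwise in $(f,g)$ and noticing that the discrepancy is an additive constant (your bookkeeping of the constants, $\epsilon m_g$ versus $\epsilon m_a$ via $m_h = m(\alpha)m(\beta)/m_a$, checks out). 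Continuity of $\OT_{\epsilon,\varphi}^{[H]}$ then follows mechanically from (i) continuity of the means, (ii) stability of weak convergence under multiplication by convergent positive scalars, and (iii) continuity of the standard $\OT_{\epsilon,\varphi}$ from \citep{ot:sejourne2019sinkhorn}. Your route is preferable in two respects: it avoids passing through uniform convergence of potentials (invoking the standard continuity result directly, at the level of values), and it produces the identity above as a reusable byproduct — this identity is essentially the same observation as the paper's remark that using $\epsilon\KL(\pi|\alpha\otimes\beta/m_g)$ as regularizer differs from the HUROT regularizer only by a constant, but stated as a closed-form relation between the two transport costs.
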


\begin{proof}
We know that $(f_n,g_n)$ is optimal for the HUROT model for the couple $(\alpha_n,\beta_n)$ if and only if it is optimal for the standard model for the couple $\left(\frac{\alpha_n}{m_g(\alpha_n,\beta_n)}, \frac{\beta_n}{m_g(\alpha_n,\beta_n)}\right)$ which converges (as $\alpha_n,\beta_n,\alpha,\beta \neq 0$) to $\left(\frac{\alpha}{m_g(\alpha,\beta)}, \frac{\beta}{m_g(\alpha,\beta)}\right)$. 

Using \citep[Prop.~10 and Thm.~2]{ot:sejourne2019sinkhorn}, it implies in the settings considered in this work ($\varphi = \imath_c$, $\KL$ or $\TV$) that $(f_n,g_n)_n$ converges (uniformly) toward a pair $(f,g)$ that is optimal (in the HUROT model) for the couple $(\alpha,\beta)$ and, by continuity of the objective functional in $(\alpha,\beta,f,g)$ it follows that $\OT_{\epsilon,\varphi}^{[H]}(\alpha_n,\beta_n) \to \OT_{\epsilon,\varphi}^{[H]}(\alpha,\beta)$. 
\end{proof}

We can now introduce the corresponding notion of (homogeneous) Sinkhorn divergence. 

\begin{definition}
	Let $\alpha,\beta \in \MM(\groundspace)$ with $m(\alpha),m(\beta) > 0$. 
	The homogeneous Sinkhorn divergence between $\alpha$ and $\beta$ is defined as
	\begin{equation}
		\Sk_{\epsilon,\varphi}^{[H]}(\alpha,\beta) \defeq \OT_{\epsilon,\varphi^*}^{[H]}(\alpha,\beta) - \frac{1}{2}\OT_{\epsilon,\varphi^*}^{[H]}(\alpha,\alpha) - \frac{1}{2} \OT_{\epsilon,\varphi^*}^{[H]}(\beta,\beta).
	\end{equation}
\end{definition}

By construction, $\Sk_{\epsilon,\varphi^*}^{[H]}$ is homogeneous. 
Interestingly, it is also non-negative under standard assumptions, without needing a ``mass bias'' term (see \cref{remark:mass_bias_std}).

\begin{proposition}\label{prop:HOT_sk_positive}
Let $K_\epsilon(x,y) = e^{-\frac{c(x,y)}{\epsilon}}$, and assume that $K_\epsilon$ is a positive definite kernel. Then, 
\[ \Sk_{\epsilon,\varphi^*}^{[H]}(\alpha,\beta) \geq 0, \]
with equality if, and only if, $\alpha = \beta$.
\end{proposition}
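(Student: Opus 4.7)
The plan is to adapt the sub-optimal potentials trick of Feydy et al. and Séjourné et al.\ to the HUROT dual \eqref{eq:HOT_dual}. By the symmetry $(f,g)\mapsto (g,f)$ of the objective $J^{[H]}_{(\alpha,\alpha)}$, I may choose a symmetric pair of optimal potentials $(f_\alpha,f_\alpha)$ for $\OT^{[H]}_{\epsilon,\varphi}(\alpha,\alpha)$, and similarly $(f_\beta,f_\beta)$ for $\OT^{[H]}_{\epsilon,\varphi}(\beta,\beta)$. I then use the pair $(f_\alpha,f_\beta)$ as sub-optimal test potentials in the dual functional $J^{[H]}_{(\alpha,\beta)}$, which yields a lower bound on $\OT^{[H]}_{\epsilon,\varphi}(\alpha,\beta)$ that I subtract from the (exact) dual values at $(\alpha,\alpha)$ and $(\beta,\beta)$.

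The first observation is that the $\varphi^*$-terms $\braket{-\varphi^*(-f_\alpha),\alpha}$ and $\braket{-\varphi^*(-f_\beta),\beta}$ appear with opposite signs in $\Sk^{[H]}_{\epsilon,\varphi}(\alpha,\beta) = \OT^{[H]}_{\epsilon,\varphi}(\alpha,\beta) - \frac{1}{2}\OT^{[H]}_{\epsilon,\varphi}(\alpha,\alpha) - \frac{1}{2}\OT^{[H]}_{\epsilon,\varphi}(\beta,\beta)$ and therefore cancel exactly. The second observation is that the constant mass terms stemming from $-1/m_h$, $-1/m(\alpha)$ and $-1/m(\beta)$ also cancel, thanks to the identity
\begin{equation*}
\frac{m(\alpha)\,m(\beta)}{m_h(\alpha,\beta)} = \frac{m(\alpha)+m(\beta)}{2} = \frac{m(\alpha)}{2}+\frac{m(\beta)}{2},
\end{equation*}
which incidentally explains why no ``mass bias'' correction is needed (cf. \cref{remark:mass_bias_std}).

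Setting $\tilde\alpha \defeq e^{f_\alpha/\epsilon}\cdot \alpha$ and $\tilde\beta \defeq e^{f_\beta/\epsilon}\cdot \beta$ (both positive measures on $\groundspace$), what remains rearranges precisely into
\begin{equation*}
\Sk^{[H]}_{\epsilon,\varphi}(\alpha,\beta) \;\geq\; \frac{\epsilon}{2m(\alpha)}\braket{\tilde\alpha,\tilde\alpha}_{K_\epsilon} + \frac{\epsilon}{2m(\beta)}\braket{\tilde\beta,\tilde\beta}_{K_\epsilon} - \frac{\epsilon}{m_g(\alpha,\beta)}\braket{\tilde\alpha,\tilde\beta}_{K_\epsilon} \;=\; \frac{\epsilon}{2}\left\| \frac{\tilde\alpha}{\sqrt{m(\alpha)}} - \frac{\tilde\beta}{\sqrt{m(\beta)}}\right\|_{K_\epsilon}^{2},
\end{equation*}
after expanding the squared $K_\epsilon$-norm. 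Non-negativity then follows immediately from the positive definiteness of $K_\epsilon$.

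The main obstacle is the equality case. One direction ($\alpha=\beta \Rightarrow \Sk^{[H]}_{\epsilon,\varphi}=0$) is clear from the definition. Conversely, if $\Sk^{[H]}_{\epsilon,\varphi}(\alpha,\beta) = 0$ then the lower bound must be tight, which by positive definiteness of $K_\epsilon$ forces $\tilde\alpha/\sqrt{m(\alpha)} = \tilde\beta/\sqrt{m(\beta)}$ as positive measures, and simultaneously forces $(f_\alpha,f_\beta)$ to be an optimal pair of dual potentials for the cross problem. Re-injecting this equality into the fixed-point conditions underlying the Sinkhorn iterations \eqref{eq:HUROT_Sinkhorn_algo} for the three problems, and using uniqueness of the optimal potentials up to the trivial shift invariance of the dual, one recovers $\alpha = \beta$. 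This identification step, which must handle the three settings $\varphi = \imath_c$, $\KL$ and $\TV$ uniformly, is the delicate point and will likely be the bulk of the technical work.
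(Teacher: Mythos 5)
Your non-negativity argument follows the paper's proof essentially verbatim: choose symmetric optimal potentials $(f_\alpha,f_\alpha)$ and $(g_\beta,g_\beta)$ via the symmetric reformulation (the paper's \cref{lemma:symmetric_HUROT}), plug the cross pair into $J^{[H]}_{(\alpha,\beta)}$, observe that the $\varphi^*$-terms and the mass constants cancel thanks to $m(\alpha)m(\beta)/m_h = m_a$, and recognize a squared $K_\epsilon$-norm. That part is correct and is the same decomposition the paper uses.

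The gap is in the equality case, and you are right to flag it as ``the delicate point'' — but the route you sketch will not close it. Uniqueness of optimal potentials does not directly give $\alpha = \beta$: once you know $e^{f_\alpha/\epsilon}\alpha/\sqrt{m(\alpha)} = e^{g_\beta/\epsilon}\beta/\sqrt{m(\beta)}$ and hence that $(f_\alpha, g_\beta)$ satisfies the Sinkhorn fixed-point equations for the cross problem, you have an optimal pair $(f_\alpha, g_\beta)$ for $(\alpha,\beta)$ — but nothing at that point forces $f_\alpha = g_\beta$, because a cross problem between two distinct measures has a genuinely asymmetric optimal pair. The paper's actual argument inserts an intermediate step that your proposal omits: from the fixed-point conditions it first deduces that the three optimal transport plans $\pi_{\alpha\beta}, \pi_{\alpha\alpha}, \pi_{\beta\beta}$ coincide, then substitutes this common plan into the primal decomposition of $\Sk^{[H]}_{\epsilon,\varphi}(\alpha,\beta)=0$. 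After cancellation of the $\braket{c,\pi}$ and $D_\varphi$ contributions, the surviving combination of renormalized $\KL$ terms evaluates to $\frac{1}{2}(m(\alpha)-m(\beta))^2$, forcing $m(\alpha)=m(\beta)$. Only with equal masses can one equate the two fixed-point equations for $f_\alpha$ and $g_\beta$ (the normalizations $\alpha/m(\alpha)$, $\beta/m(\beta)$, $\alpha/m_g$, $\beta/m_g$ all collapse) to conclude $f_\alpha = g_\beta$ and hence $\alpha=\beta$. Without this mass-equality step your ``re-injection into the fixed-point conditions'' argument is circular.
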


The proof of this proposition rely on the following result, adapted from \citep[Prop.~14]{ot:sejourne2019sinkhorn}. 
For the sake of concision, its proof has been deferred to the appendix.
\begin{lemma}\label{lemma:symmetric_HUROT} One has
	\begin{equation}\label{eq:HOT_self_dual}
	 \OT^{[H]}_{\epsilon,\varphi}(\alpha,\alpha) = \sup_{f \in \CC(\groundspace)} 2 \braket{- \varphi^*(-f), \alpha} - \epsilon \braket{e^{\frac{f \oplus f - c}{\epsilon}} - 1 , \frac{\alpha \otimes \alpha}{m(\alpha)}}
	 \end{equation}
\end{lemma}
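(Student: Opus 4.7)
The plan is to start from the dual formula established in \cref{prop:HOT_dual_formulation} applied to the pair $(\alpha,\alpha)$ and then reduce the two-variable supremum to a one-variable one by a symmetrization argument. Since $m_g(\alpha,\alpha) = m_h(\alpha,\alpha) = m(\alpha)$, direct substitution into \eqref{eq:HOT_dual} yields
\begin{equation*}
\OT^{[H]}_{\epsilon,\varphi}(\alpha,\alpha) = \sup_{f,g \in \CC(\groundspace)} J(f,g),
\end{equation*}
where
\begin{equation*}
J(f,g) \defeq \braket{-\varphi^*(-f),\alpha} + \braket{-\varphi^*(-g),\alpha} - \epsilon \braket{e^{\frac{f \oplus g - c}{\epsilon}} - 1, \frac{\alpha \otimes \alpha}{m(\alpha)}}.
\end{equation*}
The target identity amounts to showing that this supremum is realized along the diagonal $\{f = g\}$.

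Two observations drive the argument. First, $J$ is symmetric under swapping the potentials: $J(f,g) = J(g,f)$. The two unary terms are exchanged, and the kernel term is invariant because $c$ is symmetric and the reference measure $\alpha \otimes \alpha$ is invariant under the flip $(x,y) \mapsto (y,x)$. Second, $J$ is concave in $(f,g)$: the map $f \mapsto -\varphi^*(-f)$ is concave since $\varphi^*$ is convex (as a Legendre transform), and $(f,g) \mapsto e^{(f \oplus g - c)/\epsilon}$ is convex since the exponential is convex and $f \oplus g - c$ is affine in $(f,g)$, so the kernel contribution appears with a minus sign and remains concave.

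Combining these, for any $(f,g) \in \CC(\groundspace)^2$ I set $h \defeq \tfrac{f+g}{2}$ and apply midpoint concavity followed by symmetry:
\begin{equation*}
J(h,h) = J\!\left(\tfrac{f+g}{2},\tfrac{f+g}{2}\right) \geq \tfrac{1}{2} J(f,g) + \tfrac{1}{2} J(g,f) = J(f,g).
\end{equation*}
Hence restricting the supremum to the diagonal $f = g$ cannot decrease its value, while trivially it cannot increase it either. Taking the supremum over the single variable $f = g = h$ and noting that $\braket{-\varphi^*(-f),\alpha}$ appears twice yields exactly \eqref{eq:HOT_self_dual}.

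I do not expect any real obstacle here: the argument is the exact analogue of the symmetrization step used in \citep[Prop.~14]{ot:sejourne2019sinkhorn}, with the only adjustments being the replacement of $\alpha \otimes \alpha$ by its rescaling $\frac{\alpha \otimes \alpha}{m(\alpha)}$ (which is harmless since it is a fixed positive measure independent of $(f,g)$) and the absorption of $m_g = m_h = m(\alpha)$ into that same reference. The only point that merits explicit mention is the verification of concavity of the exponential coupling term, which follows because $f \oplus g - c$ is affine in $(f,g)$ and composition of the convex exponential with an affine map is convex.
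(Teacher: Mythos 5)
Your proof is correct, and it takes a genuinely different route from the paper's. You stay entirely at the dual level: after specializing the dual of \cref{prop:HOT_dual_formulation} to $\beta = \alpha$ (where $m_g = m_h = m(\alpha)$), you observe that the objective $J(f,g)$ is jointly concave (each $\braket{-\varphi^*(-\cdot),\alpha}$ term is concave since $\varphi^*$ is convex, and the coupling term is $-\epsilon$ times an integral of $\exp$ composed with an affine map) and symmetric under $(f,g) \mapsto (g,f)$ (using symmetry of $c$ and of $\alpha\otimes\alpha$), so midpoint concavity forces the supremum onto the diagonal. The paper instead proves the nontrivial inequality $\OT^{[H]}_{\epsilon,\varphi}(\alpha,\alpha) \leq \sup_f[\cdots]$ by passing through the primal: starting from an $f$ optimal for the symmetric dual, it builds the candidate plan $\pi = \exp((f\oplus f - c)/\epsilon)\,\alpha\otimes\alpha/m(\alpha)$, identifies its marginals, uses the subdifferential relation $\dd\pi_i/\dd\alpha \in \partial\varphi^*(-f)$ to rewrite $D_\varphi(\pi_i|\alpha)$, and sums the primal terms to recover the dual value at $(f,f)$. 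Your argument is more elementary and has the small advantage of not presupposing that the supremum in \eqref{eq:HOT_self_dual} is attained; the paper's argument, on the other hand, produces the explicit optimal plan and the fixed-point characterization of the symmetric potential as a byproduct, which is then reused in the proof of \cref{prop:HOT_sk_positive}. Both are valid; the one small presentational point to tighten in yours is that the sentence ``restricting the supremum to the diagonal cannot decrease its value'' should be read as the content of the concavity step (the other inclusion, $\sup_{f=g} \leq \sup_{f,g}$, is the trivial one), which is what you intend.
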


\begin{proof}[Proof of \cref{prop:HOT_sk_positive}]
	Let $f_\alpha$ and $g_\beta$ be the minimizers of $\OT^{[H]}_{\epsilon,\varphi}(\alpha,\alpha)$ and $\OT^{[H]}_{\epsilon,\varphi}(\beta,\beta)$, respectively. 
	Note the relation
	\[ \OT^{[H]}_{\epsilon,\varphi}(\alpha,\alpha) = 2 \braket{- \varphi^*(-f_\alpha) , \alpha} - \epsilon \left\|e^{\frac{f_\alpha}{\epsilon}} \frac{\alpha}{\sqrt{m(\alpha)}} \right\|^2_{K_\epsilon} + \epsilon m(\alpha)\]
	and symmetrically in $\beta$. 
	
	As $f_\alpha$ and $g_\beta$ are sub-optimal for the dual problem corresponding to $\OT_{\epsilon,\varphi}^{[H]}(\alpha,\beta)$, we have:
	\begin{align*}
		\OT^{[H]}_{\epsilon,\varphi}(\alpha,\beta) \geq &\braket{-\varphi^*(-f_\alpha),\alpha} + \braket{-\varphi^*(-g_\beta),\beta} - \epsilon \braket{ e^{\frac{f_\alpha \oplus g_\beta - c}{\epsilon}}, \frac{\alpha \otimes \beta}{\sqrt{m(\alpha) m(\beta)}}} + \frac{\epsilon}{2} (m(\alpha) + m(\beta)) \\
		\geq &\braket{-\varphi^*(-f_\alpha),\alpha} + \braket{-\varphi^*(-g_\beta),\beta} - \epsilon \braket{ e^{\frac{f_\alpha}{\epsilon}} \frac{\alpha}{\sqrt{m(\alpha)}}, e^{\frac{g_\beta}{\epsilon}} \frac{\beta}{\sqrt{m(\beta)} }}_{K_\epsilon} + \frac{\epsilon}{2} (m(\alpha) + m(\beta)) \\
		\geq &\frac{1}{2} \OT^{[H]}_{\epsilon,\varphi}(\alpha,\alpha) + \frac{1}{2} \OT^{[H]}_{\epsilon,\varphi}(\beta,\beta) \\
		&+ \frac{\epsilon}{2}  \left\|e^{\frac{f_\alpha}{\epsilon}} \frac{\alpha}{\sqrt{m(\alpha)}} \right\|^2_{K_\epsilon} + \frac{\epsilon}{2} \left\|e^{\frac{g_\beta}{\epsilon}} \frac{\beta}{\sqrt{m(\beta)}} \right\|^2_{K_\epsilon} - \epsilon \braket{ e^{\frac{f_\alpha}{\epsilon}} \frac{\alpha}{\sqrt{m(\alpha)}}, e^{\frac{g_\beta}{\epsilon}} \frac{\beta}{\sqrt{m(\beta)} }}_{K_\epsilon}
	\end{align*}
	so that
	\begin{equation}
	\OT^{[H]}_{\epsilon,\varphi}(\alpha,\beta) - \frac{1}{2} \OT^{[H]}_{\epsilon,\varphi}(\alpha,\alpha) - \frac{1}{2} \OT^{[H]}_{\epsilon,\varphi}(\beta,\beta)	\geq \left\| e^{\frac{f_\alpha}{\epsilon}} \frac{\alpha}{\sqrt{m(\alpha)}} - e^{\frac{g_\beta}{\epsilon}} \frac{\beta}{\sqrt{m(\beta)}}  \right\|_{K_\epsilon} \geq 0
	\end{equation}
	which proves the non-negativity. 
	
	Furthermore, the equality case reads $e^{\frac{f_\alpha}{\epsilon}} \frac{\alpha}{\sqrt{m(\alpha)}} = e^{\frac{g_\beta}{\epsilon}} \frac{\beta}{\sqrt{m(\beta)}}$. 
	By the characterization of $f_\alpha$ and $g_\beta$ as fixed point of their respective Sinkhorn algorithms, we have
	\begin{align*} 
	f_\alpha &= -\aprox_{\epsilon,\varphi^*} \left( \epsilon \log \braket{e^{\frac{f_\alpha - c}{\epsilon}}, \frac{\alpha}{m(\alpha)}} \right),\\
	g_\beta &= -\aprox_{\epsilon,\varphi^*} \left( \epsilon \log \braket{e^{\frac{g_\beta- c}{\epsilon}}, \frac{\beta}{m(\beta)}} \right).
	\end{align*}
	Using the equality case aforementioned, we have 
		\begin{align*} 
	f_\alpha &= -\aprox_{\epsilon,\varphi^*} \left( \epsilon \log \braket{e^{\frac{g_\beta - c}{\epsilon}}, \frac{\beta}{m_g(\alpha,\beta)}} \right),\\
	g_\beta &= -\aprox_{\epsilon,\varphi^*} \left( \epsilon \log \braket{e^{\frac{f_\alpha - c}{\epsilon}}, \frac{\alpha}{m_g(\alpha,\beta)}} \right).
	\end{align*}
	Therefore, $(f_\alpha,g_\beta)$ is actually an optimal couple for the HUROT problem between $\alpha$ and $\beta$, as a fixed point of the corresponding Sinkhorn map.
	
	From this, we can write the optimal transport plans $\pi_{\alpha\beta},\pi_{\alpha\alpha},\pi_{\beta\beta}$ between the corresponding couple of measures as 
	\[ \pi_{\alpha\beta} = e^{\frac{f_\alpha \oplus g_\beta - c}{\epsilon}} \frac{\dd \alpha \otimes \beta}{m_g(\alpha,\beta)},\quad \pi_{\alpha\alpha} = e^{\frac{f_\alpha \oplus f_\alpha - c }{\epsilon}} \frac{\dd \alpha \otimes \alpha}{m(\alpha)},\quad \pi_{\beta \beta} = e^{\frac{g_\beta \oplus g_\beta - c}{\epsilon}} \frac{\dd \beta\otimes \beta}{m(\beta)},\]
	which actually reads
	\[ \pi_{\alpha\beta} = \pi_{\alpha\alpha} = \pi_{\beta\beta}. \]
	Let $\pi$ denote this common transportation plan. 
	Since $\Sk_{\epsilon,\varphi}^{[H]}(\alpha,\beta) = 0$, and observing that the terms $\braket{c,\pi},D_\varphi(\pi_1|\alpha)$ and $D_\varphi(\pi_2|\beta)$ in the primal problems cancel each other, and using the relations
	\begin{align*} 
	&2\KL(\pi | \alpha \otimes \beta) - \KL(\pi | \alpha \otimes \alpha) - \KL(\pi|\beta \otimes \beta) = 0, \\
	&\frac{1}{2}\left( \KL\left(\pi|\frac{\alpha \otimes \beta}{m(\alpha)} \right) + \KL\left(\pi|\frac{\alpha \otimes \beta}{m(\beta)} \right) \right) = \KL(\pi | \alpha \otimes \beta) + m(\pi) \log(m_g(\alpha,\beta)) + m_a(\alpha,\beta) - m(\alpha)m(\beta),
	\end{align*}
	 we can write
	\begin{align*}
		0 = &\frac{1}{2} \left( \KL\left(\pi|\frac{\alpha \otimes \beta}{m(\alpha)}\right) + \KL\left(\pi|\frac{\alpha \otimes \beta}{m(\beta)}\right)\right) - \frac{1}{2} \KL\left(\pi | \frac{\alpha \otimes \alpha}{m(\alpha)}\right) - \frac{1}{2} \KL\left(\pi | \frac{\beta \otimes \beta}{m(\beta)} \right), \\
		= &m(\pi)\log(m_g) + m_a - m(\alpha)m(\beta) \\
		& - \frac{1}{2} m(\pi) \log(m(\alpha)) - \frac{1}{2} m(\alpha) + \frac{1}{2} m(\alpha)^2 \\
		&- \frac{1}{2} m(\pi) \log(m(\beta)) - \frac{1}{2} m(\beta) + \frac{1}{2} m(\beta)^2 \\
		= &\frac{1}{2} (m(\alpha) - m(\beta))^2
	\end{align*}
	which implies that $m(\alpha) = m(\beta) \eqdef m$. 
	From this, it follows that 
	\[
	 f_\alpha = -\aprox_{\epsilon,\varphi^*} \left( \epsilon \log \braket{e^{\frac{f_\alpha - c}{\epsilon}}, \frac{\alpha}{m}} \right) = -\aprox_{\epsilon,\varphi^*} \left( \epsilon \log \braket{e^{\frac{g_\beta- c}{\epsilon}}, \frac{\beta}{m}} \right) = g_\beta, 
	\]
	hence $\alpha = \beta$.
\end{proof}

\begin{remark}
It may be appealing to replace the entropic regularization term \eqref{eq:HOT_regularization_term} by $\epsilon \KL\left( \pi | \frac{\alpha \otimes \beta}{m_g(\alpha,\beta)} \right)$. 
This indeed leads to an homogeneous problem that shares most of the properties of the proposed $\OT_{\epsilon,\varphi}^{[H]}$. 
Actually, the dual formulation would read
\[ \sup_{f,g \in \CC(\groundspace)} \braket{-\varphi^*(-f) , \alpha} + \braket{-\varphi^*(-g), \beta} - \epsilon \braket{ \frac{e^{\frac{f \oplus g - c}{\epsilon}}}{m_g} -\frac{1}{m_g} , \alpha \otimes \beta },  \]
so that the two quantities only differ from a constant term and are substantially equivalent. 
Note also that $\pi \mapsto \frac{\epsilon}{2} \left( \KL\left(\pi | \frac{\alpha}{m(\alpha)} \otimes \beta\right) + \KL\left(\pi | \alpha \otimes \frac{\beta}{m(\beta)}\right) \right)$ is minimized for $\pi = \frac{\alpha \otimes \beta}{m_g(\alpha,\beta)}$, so both entropic terms play morally the same role. 

The one we propose presents the advantage of leading to a Sinkhorn divergence that does not need the introduction of a mass bias term: using $\epsilon \KL\left( \pi | \frac{\alpha \otimes \beta}{m_g(\alpha,\beta)} \right)$ would require to add $+\epsilon(\sqrt{m(\alpha)}-\sqrt{m(\beta)})^2$ to the corresponding Sinkhorn divergence to make it positive. Interestingly, this mass bias correspond to a sort of Hellinger distance between the masses of the two measures.
\end{remark}

\paragraph{Continuity around the null measure.} Previously in this section, we only considered the HUROT model whenever $\alpha,\beta \neq 0$. 
As in the standard case \citep[\S 4.6]{ot:sejourne2019sinkhorn}, assessing continuity of our model around the null measure requires specific care. 
Recall that we assume $\varphi(0) < \infty$. 

\begin{proposition}[Continuity around the null measure]\label{eq:HUROT_continuity_null_measure}~\\
	$\bullet$ Let $\beta \in \MM(\groundspace)\backslash\{0\}$. Define 
	\[ \OT_{\epsilon,\varphi}^{[H]}(0,\beta) \defeq \left(\varphi(0) + \frac{\epsilon}{2}\right) m(\beta). \]
	Let $(\alpha_n)_n$ be a sequence of non-null measures that weakly converges toward the null measure: $\alpha_n \cvweak 0$. Then
	\[ \OT_{\epsilon,\varphi}^{[H]}(\alpha_n,\beta) \to \OT_{\epsilon,\varphi}^{[H]}(0,\beta).\]
	$\bullet$ Furthermore, if we set
		\[ \OT_{\epsilon,\varphi}^{[H]}(0,0) \defeq 0, \]
	then for any sequences $(\alpha_n)_n,(\beta_n)_n$ that both weakly converge toward the null measure, one has
	\[ \OT_{\epsilon,\varphi}^{[H]}(\alpha_n,\beta_n) \to 0.\]
\end{proposition}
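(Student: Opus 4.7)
The plan is to prove both statements by sandwiching $\OT_{\epsilon,\varphi}^{[H]}$ between an upper bound obtained from the primal \eqref{eq:HOT_primal} and a lower bound obtained from the dual \eqref{eq:HOT_dual}.

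For the first statement, I will get the upper bound by testing the primal at the trivial plan $\pi = 0$: using $\KL(0|\mu) = m(\mu)$ (the entropy function underlying $\KL$ takes value $1$ at $0$) and $D_\varphi(0|\mu) = \varphi(0) m(\mu)$, a direct computation gives $R(0|\alpha_n,\beta) = \frac{1}{2}(m(\alpha_n)+m(\beta))$, so that $\OT_{\epsilon,\varphi}^{[H]}(\alpha_n,\beta) \leq (\varphi(0) + \epsilon/2)(m(\alpha_n) + m(\beta))$, which tends to $(\varphi(0)+\epsilon/2)m(\beta)$. For the matching lower bound I will test the dual at fixed continuous potentials $(f, g)$ and pass to the limit in each summand of $J^{[H]}_{(\alpha_n,\beta)}(f,g)$: the term $\braket{-\varphi^*(-f),\alpha_n}$ vanishes by weak convergence (the integrand is bounded continuous in our settings of interest); $\braket{-\varphi^*(-g),\beta}$ is constant; the coupling piece is controlled by $\epsilon \|e^{(f\oplus g - c)/\epsilon}\|_\infty \cdot \sqrt{m(\alpha_n) m(\beta)} \to 0$, since $\alpha_n \otimes \beta/m_g(\alpha_n,\beta)$ has total mass $\sqrt{m(\alpha_n) m(\beta)} \to 0$; and finally $\epsilon \braket{1/m_h(\alpha_n,\beta),\alpha_n \otimes \beta} = \epsilon(m(\alpha_n)+m(\beta))/2 \to \epsilon m(\beta)/2$. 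Taking the liminf in $n$ and then the sup over $g$, and exploiting that $-\varphi^*(-g) \leq \varphi(0)$ pointwise with equality realized by constant potentials $g \equiv M$ for $M$ large enough (since $\varphi(0)<\infty$ forces $\varphi^*(q) = -\varphi(0)$ for $q$ below any subgradient of $\varphi$ at $0$), I conclude $\liminf_n \OT_{\epsilon,\varphi}^{[H]}(\alpha_n,\beta) \geq (\varphi(0) + \epsilon/2) m(\beta)$, which closes the sandwich.

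For the second statement, the same trivial test plan yields $\OT_{\epsilon,\varphi}^{[H]}(\alpha_n,\beta_n) \leq (\varphi(0)+\epsilon/2)(m(\alpha_n)+m(\beta_n)) \to 0$. The matching lower bound is just the non-negativity of $\OT_{\epsilon,\varphi}^{[H]}$ in the settings of interest ($\varphi = \imath_c, \KL$ or $\TV$), which follows from the non-negativity of $c$, $D_\varphi$, and of $\KL$ (hence of $R$) in the primal.

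The main subtlety I anticipate is the liminf calculation in the first part, where the dual objective contains terms with $m_g(\alpha_n,\beta)$ and $m_h(\alpha_n,\beta)$ in the denominator, both tending to $0$. The key observation is the asymmetry between them: $m(\alpha_n)m(\beta)/m_g(\alpha_n,\beta) = \sqrt{m(\alpha_n) m(\beta)} \to 0$ kills the coupling term, whereas $m(\alpha_n)m(\beta)/m_h(\alpha_n,\beta) = (m(\alpha_n)+m(\beta))/2 \to m(\beta)/2$ contributes the critical $\epsilon m(\beta)/2$ that accounts for the extra $\epsilon/2$ appearing in the definition of $\OT_{\epsilon,\varphi}^{[H]}(0,\beta)$.
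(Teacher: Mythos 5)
Your argument is correct and takes a genuinely different route from the paper. For the first bullet, the paper obtains the upper bound by testing the primal at $\pi = \alpha_n \otimes \beta$ and the lower bound by a Jensen inequality on $D_\varphi$ followed by a lower-semicontinuity/pointwise-limit argument on the functional $F_n$; you instead test at $\pi = 0$ and, more interestingly, obtain the lower bound from the dual $J^{[H]}_{(\alpha_n,\beta)}(f,g)$ by passing to the limit term by term for \emph{fixed} continuous potentials and then optimizing over $g$. This is cleaner because it sidesteps the somewhat delicate interchange of $\lim_n$ and $\inf_\pi$ in the paper's primal-side lower bound, and it cleanly isolates where the extra $\tfrac{\epsilon}{2}m(\beta)$ comes from (the $1/m_h$ constant term, since $m(\alpha_n)m(\beta)/m_h(\alpha_n,\beta) = \tfrac{1}{2}(m(\alpha_n)+m(\beta))$, while the coupling term is killed by $m(\alpha_n)m(\beta)/m_g(\alpha_n,\beta) = \sqrt{m(\alpha_n)m(\beta)} \to 0$). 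For the second bullet, the paper invokes homogeneity of $\OT^{[H]}_{\epsilon,\varphi}$; you use the same $\pi = 0$ upper bound plus non-negativity of the primal, which is more elementary and requires no separate input.

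One imprecision worth fixing: your justification of $\sup_g \braket{-\varphi^*(-g),\beta} = \varphi(0)\,m(\beta)$ — ``equality realized by constant potentials $g\equiv M$ for $M$ large enough, since $\varphi^*(q) = -\varphi(0)$ for $q$ below any subgradient of $\varphi$ at $0$'' — fails for $\varphi = \varphi_{\KL}$. There $\varphi'(0^+) = -\infty$, so $\partial\varphi(0) = \emptyset$, and $\varphi^*(q) = e^q - 1 > -1 = -\varphi(0)$ for every finite $q$; the supremum over constant $g$ is only attained in the limit $M \to +\infty$. The statement you actually need, $\sup_{q \in \R} -\varphi^*(q) = \varphi(0)$, is still true and follows directly from Fenchel--Moreau: since $\varphi$ is proper, convex, lower-semicontinuous, $\varphi(0) = \varphi^{**}(0) = \sup_q\, (0\cdot q - \varphi^*(q)) = \sup_q -\varphi^*(q)$. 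With this substitution the argument is airtight for all entropy functions with $\varphi(0) < \infty$ (in particular $\KL$ and $\TV$, the cases treated by the paper).
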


\begin{remark}
Contrary to the standard UROT model, $\OT_{\epsilon,\varphi}^{[H]}(0,\beta)$ depends on $\epsilon$ (the result is simply $\varphi(0)m(\beta)$ in the standard model). 
This can be seen as an artifact of the fact that our model directly encompasses the ``mass bias'' in the functional $\OT_{\epsilon,\varphi}^{[H]}$. 
\end{remark}

\begin{proof}[Proof of \cref{eq:HUROT_continuity_null_measure}] The proof where only $\alpha_n \to 0$ follows the spirit of the one of \citep[Prop.~18]{ot:sejourne2019sinkhorn}, though requiring specific adaptation related to our regularization term. 
When both measures go to $0$, we can leverage the homogeneity of our model to prove the claim easily. 

$\bullet$ Using that $\alpha_n \otimes \beta$ is a suboptimal transport plan for \eqref{eq:HOT_primal}, we have
\begin{align*}
	\OT_{\epsilon,\varphi}^{[H]}(\alpha_n,\beta) \leq &\braket{c,\alpha_n \otimes\beta} + D_\varphi(m(\beta) \alpha_n | \alpha_n) + D_\varphi(m(\alpha_n)\beta|\beta) + \epsilon R(\alpha_n \otimes \beta |\alpha_n,\beta) \\
	\leq &\braket{c,\alpha_n \otimes\beta} + m(\alpha_n) \varphi(m(\beta)) + m(\beta) \varphi(m(\alpha_n)) \\
	&+ \epsilon \left(m(\alpha_n)m(\beta) \log(m_g(\alpha_n,\beta)) + \frac{1}{2}(m(\alpha_n) + m(\beta)) - m(\alpha_n)m(\beta)\right) \\
	\to &\varphi(0)m(\beta) + \frac{\epsilon}{2} m(\beta).
\end{align*}
On the other hand, Jensen inequality applied to $D_\varphi$ allows us to write
\begin{align*}
	\OT_{\epsilon,\varphi}^{[H]}(\alpha_n,\beta) \geq & \inf_\pi \braket{c,\pi} + m(\alpha_n)\varphi(m(\pi)) + m(\beta) \varphi(m(\pi)) + \epsilon R(\pi|\alpha_n , \beta) \eqdef F_n(\pi).
\end{align*}
We observe that
\begin{align*}
\lim_{n \to \infty} F_n(\pi)
	\begin{cases} 
		 \geq \braket{c,\pi} + m(\beta)\varphi(m(\pi)) + \frac{\epsilon}{2} \KL(\pi | 0) = +\infty \qquad \text{ if } \pi \neq 0,\\
		= m(\beta) \varphi(0) + \frac{\epsilon}{2}m(\beta) \qquad \text{ if } \pi = 0,
	\end{cases}
\end{align*}
where the second equality follows from the relation 
\[ R(\pi|\alpha_n,\beta) = \KL(\pi|\alpha_n \otimes \beta) - m(\pi)\log(m_g(\alpha_n,\beta)) + m_a(\alpha_n,\beta) - m(\alpha_n)m(\beta) \]
which evaluates to $\frac{1}{2} m(\beta)$ for $\pi = 0$ and $\alpha_n \to 0$. 

As $F_n$ is lower-semicontinuous, it follows that $\lim_n \OT_{\epsilon,\varphi}^{[H]}(\alpha_n,\beta) \geq \varphi(0) m(\beta) + \frac{\epsilon}{2} m(\beta)$, and finally
\[ \lim_{n\to\infty} \OT_{\epsilon,\varphi}^{[H]}(\alpha_n,\beta) = \left(\varphi(0) + \frac{\epsilon}{2}\right) m(\beta), \]
proving the continuity of $\OT_{\epsilon,\varphi}^{[H]}$ around couple of the form $(0,\beta)$ when $\beta \neq 0$.

$\bullet$ We now consider two sequences $\alpha_n, \beta_n \cvweak 0$. Define $M_n = \max(m(\alpha_n),m(\beta_n))$. Using the homogeneity of our model, we can write
\[ \OT_{\epsilon,\varphi}^{[H]}(\alpha_n,\beta_n) = M_n \cdot \OT_{\epsilon,\varphi}^{[H]}\left(\frac{\alpha_n}{M_n}, \frac{\beta_n}{M_n}\right). \]
Using $\frac{\alpha_n}{M_n} \otimes \frac{\beta_n}{M_n}$ as a suboptimal transport plan, we have (note that the two measures have total masses $\leq 1$)
\[ \OT_{\epsilon,\varphi}^{[H]}\left(\frac{\alpha_n}{M_n}, \frac{\beta_n}{M_n}\right) \leq \|c\|_\infty + \varphi\left(\frac{\alpha_n}{M_n}\right) + \varphi\left(\frac{\beta_n}{M_n}\right) + 1. \]
As $\varphi$ is bounded over $[0,1]$, it follows that $\left(\OT_{\epsilon,\varphi}^{[H]}\left(\frac{\alpha_n}{M_n}, \frac{\beta_n}{M_n}\right)\right)_n$ is bounded as well, hence since $M_n \to 0$, 
\[ \lim_{n \to \infty} \OT_{\epsilon,\varphi}^{[H]}(\alpha_n,\beta_n) = 0, \]
proving the continuity in this case as well. 
\end{proof}

\section{Application to Optimal Transport with boundary}
\label{sec:OT_with_boundary}

\subsection{Definition and motivation}

Optimal Transport with Boundary (OTB) was introduced by Figalli and Gigli in \citep{ot:figalli2010newTransportationDistance} as a way to model heat diffusion equations with specific boundary conditions. 
We first give a brief introduction to this model as introduced by the authors in their seminal paper. 

We consider an open bounded domain $\groundspace \subset \R^d$. Let $\overline{\groundspace}$ be its closure and $\thediag$ denote its boundary. 
For the sake of simplicity, we assume that the cost function $c : \overline{\groundspace} \times \overline{\groundspace} \to \R_+$ is given by $c(x,y) = \|x-y\|^2$, though most of the approach developed in the following would adapt to more general symmetric Lipschitz continuous cost functions. 
To alleviate notations, we introduce $\cdiag(x) \defeq c(x,\thediag) = c(\thediag,x) = \inf_{y \in \thediag} c(x,y)$. 
We also assume that the boundary $\thediag$ is regular enough so that there exist a measurable map $P: \groundspace \to \thediag$ such that $c(x,P(x)) = \cdiag(x)$. 

Now, let $\alpha,\beta$ be two locally finite Radon measures supported on $\groundspace$ which can be thought as representing an initial and a final distribution of heat. 
The idea is the following: during the diffusion process, mass (heat) can either move inside the domain ($\groundspace \to \groundspace$) or it may happen that the boundary of the domain $\thediag$ absorbs some mass ($\groundspace \to \thediag$) or redistributes mass to the domain ($\thediag \to \groundspace$). 

Formally, we introduce the set of \emph{admissible plans}
\begin{equation}\label{eq:def_admissible_transport_plan}
 \Adm(\alpha,\beta) \defeq \left\{ \pi \in \MM(\overline{\groundspace} \times \overline{\groundspace}),\ \forall A \subset \groundspace, \pi(A \times \overline{\groundspace}) = \alpha(A),\ \forall B \subset \groundspace, \pi(\overline{\groundspace} \times B) = \beta(B)\right\}.
 \end{equation}
Now, consider the following optimization problem:
\begin{equation}\label{eq:def_FG_problem}
	\FG(\alpha,\beta) = \inf_{\pi \in \Adm(\alpha,\beta)} \iint_{\overline{\groundspace} \times \overline{\groundspace}} c(x,y) \dd \pi(x,y).
\end{equation}
To guarantee that $\FG(\alpha,\beta) <+\infty$, we restrict to measures $\alpha,\beta$ that have finite \emph{total persistence}, where the total persistence of a measure $\mu \in \MM(\groundspace)$ is defined as
\begin{equation}
	\Pers(\mu) \defeq \FG(\mu,0) = \int_\groundspace \cdiag(x) \dd \mu(x).
\end{equation}
We will note by $\MM^c(\groundspace)$ the set of such measures. 

The key idea in the definition of admissible plans \eqref{eq:def_admissible_transport_plan} is that $\pi$ is not constrained on $\thediag \times \thediag$, in contrast with standard (balanced, non-regularized) OT \eqref{eq:vanillaOT}. 
This degree of freedom allows $\thediag$ to play the role of a reservoir that can store and redistribute any amount of mass, provided we pay the corresponding cost $\cdiag(\cdot)$, enabling in particular the comparison of measures with different (and even possibly infinite) total masses.
Note also that when $c(x,y) = \|x-y\|^2$, $(\FG(\cdot,\cdot))^{1/2}$ defines a metric over $\MM^c(\groundspace)$, and the resulting metric space is Polish (complete, separable) provided we allow for measures with infinite total masses. 

\begin{remark}[Links with Topological Data Analysis.] \label{rem:link_OTB_TDA}
This transportation model has not been widely used in OT literature to the best of our knowledge\footnote{In comparison, for instance, to the UROT model presented in \cref{subsec:background_UROT}.}.
However, it has been recently shown in \citep{divol2021understanding} that the metric $\FG$ does exactly coincide with the metrics used by the Topological Data Analysis (TDA) community to compare \emph{Persistence diagrams} (PDs), a type of descriptor routinely used to compare objects with respect to their topological properties, see \citep{tda:edelsbrunner2010computational,tda:chazal2021introduction} for an overview. 
This connection appeared to be fruitful and enabled the adaptation of various tools---both theoretical and computational ones---existing in the OT literature to the context of TDA. 
In a related work \citep{tda:lacombe2018large}, still in the context of TDA, authors proposed a regularized version of \eqref{eq:def_FG_problem} by (substantially) adding a term $+\epsilon \KL(\pi | \LL)$, where $\LL$ denotes the Lebesgue measure on $\groundspace \times \groundspace$. 
However, using the Lebesgue measure (or even $\alpha \otimes \beta$) as reference measure (aside from non-homogeneity) has several drawbacks. 
It is only properly defined for measures with finite total masses, indicating possible problems when the masses of the measures get large in practice---even though the exact distances could be mostly unchanged if the additional mass is close to the boundary $\thediag$. 
In the same vein, it does not follow the spirit of OT with boundary, which tells that points near $\thediag$ have a lesser importance. 
\end{remark}


\subsection{Reformulation as a (spatially varying) unbalanced OT problem} The first step to propose a relevant entropic regularization is to rephrase it in a formalism much closer to the standard UROT model. 
For a given measure $\mu \in \MM^c(\groundspace)$, define the renormalized measure $\hat{\mu}$ by
\[ \forall A \subset \groundspace \text{ Borel},\ \hat{\mu}(A) \defeq \int_A \cdiag(x) \dd \mu(x). \]
Note in particular the relation $m(\hat{\mu}) = \Pers(\mu) < \infty$. 

\begin{proposition}
	Let $\alpha,\beta \in \MM^c(\groundspace)$. Then, 
	\begin{equation}\label{eq:FG_as_UOT}
		\FG(\alpha,\beta) = \inf_{\pi \in \MM(\groundspace \times \groundspace)} \braket{c, \pi} + \int_\upperdiag \varphi\left(x, \frac{\dd \pi_1}{\dd \hat{\alpha}} \right) \dd \hat{\alpha} + \int_\upperdiag \varphi\left(x, \frac{\dd \pi_2}{\dd \hat{\beta}} \right) \dd \hat{\beta},
	\end{equation}
	where 
	\begin{equation}\label{eq:div_spatially_OTB}
			\varphi(x,z) = \begin{cases} |1 - \cdiag(x) \cdot z | &\text{ if } z \in \left[0,\frac{1}{\cdiag(x)}\right] \\ +\infty &\text{ otherwise.} \end{cases}
	\end{equation}
\end{proposition}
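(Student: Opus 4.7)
The plan is to reformulate $\FG(\alpha,\beta)$ as an optimization over transport plans supported only on the interior $\groundspace \times \groundspace$, absorbing the mass exchanged with $\thediag$ into penalties on the marginals. Given $\pi \in \Adm(\alpha,\beta)$, I would first decompose it into four pieces based on whether each coordinate lies in $\groundspace$ or in $\thediag$: the interior part $\pi^{\mathrm{in}} \defeq \pi|_{\groundspace \times \groundspace}$, a ``to-boundary'' part $\pi^{\to\partial} \defeq \pi|_{\groundspace \times \thediag}$, a ``from-boundary'' part $\pi^{\partial\to} \defeq \pi|_{\thediag \times \groundspace}$, and a boundary-to-boundary piece $\pi^{\partial\partial}$. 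The admissibility constraints in \eqref{eq:def_admissible_transport_plan} then read $\pi^{\mathrm{in}}_1 + (\pix)_\# \pi^{\to\partial} = \alpha$ and symmetrically in the second coordinate, so $\pi^{\mathrm{in}}_1 \leq \alpha$ with $(\pix)_\# \pi^{\to\partial} = \alpha - \pi^{\mathrm{in}}_1$; the piece $\pi^{\partial\partial}$ is unconstrained and only adds non-negative cost, hence can be set to zero.

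Next, I would use the pointwise bound $c(x,y) \geq \cdiag(x)$ for $y \in \thediag$ to obtain $\iint c\, \dd\pi^{\to\partial} \geq \int_\groundspace \cdiag\, \dd((\pix)_\# \pi^{\to\partial})$, with equality when $\pi^{\to\partial}$ is concentrated on the graph of the measurable selection $P$. A symmetric reduction applies to $\pi^{\partial\to}$. Combining these observations yields
\[ \FG(\alpha,\beta) = \inf_{\substack{\pi \in \MM(\groundspace \times \groundspace) \\ \pi_1 \leq \alpha,\ \pi_2 \leq \beta}} \braket{c,\pi} + \int_\groundspace \cdiag\, \dd(\alpha - \pi_1) + \int_\groundspace \cdiag\, \dd(\beta - \pi_2). \]

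The final step is a change of variable to the renormalized measures $\hat{\alpha} = \cdiag \cdot \alpha$ and $\hat{\beta}$. Since $\groundspace$ is open and disjoint from $\thediag$, $\cdiag > 0$ on $\groundspace$, so $\hat{\alpha}$ and $\alpha$ are mutually absolutely continuous. The constraint $\pi_1 \leq \alpha$ then translates exactly to $z_1 \defeq \frac{\dd \pi_1}{\dd \hat{\alpha}}$ taking values in $[0, 1/\cdiag]$, which is the effective domain of $\varphi(x,\cdot)$ from \eqref{eq:div_spatially_OTB}. Under this condition,
\[ \int_\groundspace \cdiag\, \dd(\alpha - \pi_1) = \int_\groundspace (1 - \cdiag(x) z_1(x))\, \dd\hat{\alpha}(x) = \int_\groundspace \varphi(x, z_1(x))\, \dd\hat{\alpha}(x), \]
since $1 - \cdiag z_1 \geq 0$ allows dropping the absolute value. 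Symmetric reasoning covers the $\beta$ term; conversely, any $\pi$ of finite right-hand side is forced by the effective domain of $\varphi$ to satisfy $\pi_1 \leq \alpha, \pi_2 \leq \beta$. The two infima thus coincide, matching \eqref{eq:FG_as_UOT}.

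The step I expect to require most care is the boundary reduction: justifying that one may replace an arbitrary $\pi^{\to\partial} \in \MM(\groundspace \times \thediag)$ by $(\id,P)_\# (\pix)_\# \pi^{\to\partial}$ without loss of optimality relies on the measurability of $P$ and on a disintegration argument that preserves the Radon property, and must be done so the resulting plan is still admissible in $\Adm(\alpha,\beta)$. Once this is established, the remaining computations are a routine change of variable enabled by the strict positivity of $\cdiag$ on the open interior.
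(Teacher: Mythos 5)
Your proposal is correct and follows essentially the same route as the paper's proof: decompose the admissible plan by boundary type, use $c(x,y)\geq\cdiag(x)$ for $y\in\thediag$ (with equality on the graph of $P$) to drop the $\thediag\times\thediag$ piece and collapse the boundary pieces, then change variables to $\hat{\alpha},\hat{\beta}$ and close by noting that the effective domain of $\varphi$ enforces $\pi_1\leq\alpha,\ \pi_2\leq\beta$ in the converse direction. The only cosmetic difference is that you spell out the justification for the two without-loss-of-generality reductions that the paper states directly (citing Figalli--Gigli for $\pi(\thediag\times\thediag)=0$).
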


\begin{proof}
	Let $\alpha,\beta \in \MM^c(\groundspace)$ and $\pi \in \Adm(\alpha,\beta)$. Without loss of generality, we can assume that $\pi(\thediag \times \thediag)= 0$ \citep[Eq.~(4)]{ot:figalli2010newTransportationDistance} 
	and that $\forall A \subset \groundspace,\ \pi(A \times \thediag) = \pi(A \times P(A))$.
	Let also $\pi_1 = \pi(\cdot \times \groundspace)$ and $\pi_2 = \pi(\groundspace \times \cdot)$, that are the marginals of the restricted plan $\pi_{| \groundspace \times \groundspace}$. 
	Note the constraints $\pi_1 \leq \alpha,\ \pi_2 \leq \beta$. 
	It allows us to write
	\begin{align*}
		\iint_{\overline{\groundspace} \times \overline{\groundspace}} c(x,y) \dd \pi(x,y) &= \iint_{\groundspace \times \groundspace} c(x,y) \dd \pi + \int_{\groundspace \times \thediag} \cdiag(x) \dd \pi + \int_{\thediag \times \groundspace} \cdiag(y) \dd \pi \\
		&= \iint_{\groundspace \times \groundspace} c(x,y) \dd \pi + \int_{\groundspace \times \thediag} \cdiag(x) \dd (\alpha - \pi_1) + \int_{\thediag \times \groundspace} \cdiag(y) \dd (\beta - \pi_2) \\
		&= \iint_{\groundspace \times \groundspace} c(x,y) \dd \pi + \int_{\groundspace \times \thediag} \dd (\hat{\alpha} - \cdiag(x) \pi_1) + \int_{\thediag \times \groundspace} \dd (\hat{\beta} - \cdiag(y) \pi_2) \\
		&= \iint_{\groundspace \times \groundspace} c(x,y) \dd \pi + \int_{\groundspace \times \thediag} \left(1 - \cdiag(x) \frac{\dd \pi_1}{\dd\hat{\alpha}} \right) \dd \hat{\alpha} + \int_{\groundspace \times \thediag} \left(1 - \cdiag(y) \frac{\dd \pi_2}{\dd\hat{\beta}} \right) \dd \hat{\beta} \\
		&= \iint_{\groundspace \times \groundspace} c(x,y) \dd \pi + 	\int_\upperdiag \varphi\left(x, \frac{\dd \pi_1}{\dd \hat{\alpha}} \right) \dd \hat{\alpha} + \int_\upperdiag \varphi\left(x, \frac{\dd \pi_2}{\dd \hat{\beta}} \right) \dd \hat{\beta}.			
	\end{align*}
	From this, we observe that $\pi \in \Adm(\alpha,\beta)$ induces a plan $\pi' = \pi_{|\groundspace \times \groundspace} \in \MM(\groundspace \times \groundspace)$ which implies that 
	\[ \FG(\alpha,\beta) \geq \inf_{\pi' \in \MM(\groundspace \times\groundspace)} \braket{c,\pi'} + \int_\upperdiag \varphi\left(x, \frac{\dd \pi_1}{\dd \hat{\alpha}} \right) \dd \hat{\alpha} + \int_\upperdiag \varphi\left(x, \frac{\dd \pi_2}{\dd \hat{\beta}} \right) \dd \hat{\beta} \eqdef F(\pi').\]
	Conversely, consider $\pi' \in \MM(\groundspace \times \groundspace)$. 
	Let $\pi'_1,\pi'_2$ denote its marginals. 
	Observe that if $\pi'_1 \not\leq \alpha$ or $\pi'_2 \not\leq \beta$, the choice of $\varphi$ implies that $F(\pi') = +\infty$, so we can restrict to such plans. 
	They naturally induce an element $\pi \in \Adm(\alpha,\beta)$ defined by $\pi = \pi'$ on $\groundspace \times \groundspace$, and $\forall A \subset \groundspace, B\subset \thediag,\ \pi(A \times B) = (\alpha - \pi'_1)(P^{-1}(B) \cap A)$ (and symmetrically in $\beta,\pi'_2$), and $\iint_{\overline{\groundspace} \times \overline{\groundspace}} c \dd \pi = F(\pi')$, proving the claim by taking the infimum. 
\end{proof}

This proposition allows us to express $\FG(\alpha, \beta)$ in a formalism much closer to standard (non-regularized) unbalanced OT \eqref{eq:unbalanced_non_reg_OT}: it only involves measures with finite total masses and turns the cost of transporting mass to the boundary $\thediag$ into a penalty between the marginals of $\pi$ and $(\hat{\alpha},\hat{\beta})$. 

\begin{remark}
The key (and essentially sole) difference between \eqref{eq:FG_as_UOT} and \eqref{eq:unbalanced_non_reg_OT} is the dependence of the divergence $\varphi$ on the location $x$, a situation referred to as ``spatially varying divergence'' in \citep[Remark 3]{ot:sejourne2019sinkhorn}. 
This formalism is substantially equivalent to the standard one and most computations adapt seamlessly with the choice of $\varphi$ used in this section. 
The HUROT model could have been presented directly in the more general context of spatially varying divergences in \cref{sec:HUROT}, but this would have required several additional assumptions on $\varphi$ and would have hinder the use of many results of \citep{ot:sejourne2019sinkhorn} directly. 
For the sake of simplicity, we prefer to deal with spatially varying divergences only in this section and for the particular choice \eqref{eq:div_spatially_OTB} of $\varphi$ that allows us to retrieve (when $\epsilon = 0$) the model of Figalli and Gigli \eqref{eq:def_FG_problem}.
\end{remark}

\subsection{Regularized OT with boundary (ROTB)}

\begin{definition}
	Let $\alpha,\beta \in \MM^c(\groundspace)\backslash\{0\}$ and $\epsilon>0$ be a regularization parameter. 
	The corresponding Homogeneous Regularized Optimal Transport with Boundary (ROTB) problem is given by
	\begin{equation}\label{eq:ROTB_primal}
		\FG_\epsilon(\alpha,\beta) \defeq \inf_{\pi \in \MM(\groundspace \times \groundspace)} \braket{c, \pi} + \int_\upperdiag \varphi\left(x, \frac{\dd \pi_1}{\dd \hat{\alpha}} \right) \dd \hat{\alpha} + \int_\upperdiag \varphi\left(x, \frac{\dd \pi_2}{\dd \hat{\beta}} \right) \dd \hat{\beta} + \epsilon R(\pi|\hat{\alpha},\hat{\beta}),
	\end{equation}
	where $\varphi$ is the divergence defined in \eqref{eq:div_spatially_OTB} and $R$ is defined in \eqref{eq:HOT_regularization_term}.
\end{definition}

\begin{remark}
	The formulation \eqref{eq:FG_as_UOT} shows that OT with boundary can be recast as a (spatially varying) UOT problem involving the couple of renormalized measures $(\hat{\alpha},\hat{\beta})$, justifying to use this couple as reference measure in the entropic reference measure in \eqref{eq:ROTB_primal}. 
	Intuitively, it makes the entropic regularization term sensitive to the geometry of the problem, downweighting the points close to the boundary $\thediag$. 
	Formally, the choice of $(\hat{\alpha},\hat{\beta})$ as reference is theoretically supported by the fact that the Sinkhorn divergence corresponding to $\FG_\epsilon$ induces the same convergence as the non-regularized problem \eqref{eq:def_FG_problem}, as detailed below. 
\end{remark}

\paragraph{Dual and Sinkhorn algorithm.} We now give the dual formulation corresponding to \cref{eq:ROTB_primal}. 
A key observation is that despite the primal involves a spatially varying divergence, the dual essentially boils down to a standard problem applied to the renormalized measures $\hat{\alpha}$ and $\hat{\beta}$ in this particular setting, allowing us to adapt the results of \cref{sec:HUROT} seamlessly. 

\begin{proposition}\label{prop:ROTB_dual} Let $\alpha,\beta \in \MM^c(\groundspace)\backslash\{0\}$. One has
	\begin{equation}\label{eq:ROTB_dual}
	\begin{aligned}
	 \FG_\epsilon(\alpha,\beta) = \sup_{f, g \in \CC(\upperdiag)} &\braket{\min(1, f/\cdiag), \hat{\alpha}} + \braket{\min(1, g/\cdiag), \hat{\beta}} \\
	 &- \epsilon \braket{\frac{e^{\frac{f \oplus g - c}{\epsilon}}}{m_g(\hat{\alpha},\hat{\beta})} - \frac{1}{m_h(\hat{\alpha},\hat{\beta})}, \hat{\alpha} \otimes \hat{\beta} }.
	 \end{aligned}
	\end{equation}
	Furthermore, if $(f,g)$ is optimal for \eqref{eq:ROTB_dual}, then
	\[ \pi \defeq \exp\left(\frac{f \oplus g - c}{\epsilon}\right) \frac{\hat{\alpha} \otimes \hat{\beta}}{m_g(\hat{\alpha},\hat{\beta})} \]
	is optimal for the primal problem \eqref{eq:ROTB_primal}.
\end{proposition}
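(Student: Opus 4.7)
The plan is to recognize that the primal problem \eqref{eq:ROTB_primal} is exactly the HUROT functional of \eqref{eq:HOT_primal} applied to the pair of (finite) renormalized measures $(\hat{\alpha}, \hat{\beta}) \in \MM(\groundspace) \backslash \{0\}$, with the only twist that the $\varphi$-divergence penalizing the marginals is now spatially varying. Accordingly, I would first re-run the Fenchel--Rockafellar derivation used to obtain \cref{prop:HOT_dual_formulation}, keeping track of the fact that only the marginal terms pick up an $x$-dependence. Since the entropic regularizer $R(\pi|\hat\alpha,\hat\beta)$ is unchanged, its Legendre conjugate contribution (producing the $e^{(f\oplus g - c)/\epsilon}/m_g - 1/m_h$ term against $\hat\alpha \otimes \hat\beta$) is identical to the one computed in the appendix for \cref{prop:HOT_dual_formulation}. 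For the marginal penalties, one simply replaces the scalar Legendre transform $\varphi^*(-f(x))$ by its pointwise spatial version $\varphi^*(x,-f(x))$; convexity and lower semicontinuity of $\varphi(x,\cdot)$ for each fixed $x$, together with continuity of $x \mapsto \cdiag(x)$, make this adaptation essentially routine, as noted in \citep[Remark 3]{ot:sejourne2019sinkhorn}.

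Next I would compute $\varphi^*$ explicitly. The effective domain of $\varphi(x,\cdot)$ is $[0,1/\cdiag(x)]$, on which $\cdiag(x) z \in [0,1]$ so $\varphi(x,z) = 1 - \cdiag(x) z$. Hence
\begin{equation*}
    \varphi^*(x,q) \;=\; \sup_{z \in [0,1/\cdiag(x)]} (q+\cdiag(x))\,z \;-\; 1,
\end{equation*}
which is linear in $z$: it is maximized at $z = 1/\cdiag(x)$ when $q + \cdiag(x) \geq 0$, giving $q/\cdiag(x)$, and at $z=0$ otherwise, giving $-1$. Thus $\varphi^*(x,q) = \max(-1, q/\cdiag(x))$ and consequently $-\varphi^*(x,-f(x)) = \min(1, f(x)/\cdiag(x))$, matching precisely the two marginal terms appearing in \eqref{eq:ROTB_dual}.

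Finally, for the primal--dual relation, I would invoke the same extraction argument as in the proof of \cref{prop:HOT_dual_formulation}: stationarity of the saddle-point problem in $\pi$ identifies the unique optimizer as $\pi = \exp((f\oplus g - c)/\epsilon)\, (\hat\alpha \otimes \hat\beta)/m_g(\hat\alpha,\hat\beta)$, since the only coupling between $\pi$ and $(f,g)$ in the Lagrangian comes from the entropic term $R(\pi|\hat\alpha,\hat\beta)$, whose structure is unchanged from the HUROT case. The one genuinely new point to verify is that the sup in \eqref{eq:ROTB_dual} can be restricted to a compact subset of $\CC(\upperdiag)^2$ so that the Fenchel--Rockafellar theorem applies and a maximizer exists; I expect this to be the main (though mild) obstacle, and it is handled as in \citep[Lemmas 8--9]{ot:sejourne2019sinkhorn} using that $\varphi^*(x,\cdot)$ is bounded below (by $-1$) uniformly in $x$, which mimics the TV case treated there and pins down the admissible range of potentials.
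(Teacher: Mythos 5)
Your proof is correct and takes essentially the approach the paper intends: the paper itself offers no explicit proof of \cref{prop:ROTB_dual}, instead relying on the reader to adapt the Fenchel--Rockafellar argument behind \cref{prop:HOT_dual_formulation} (detailed in the appendix) to the spatially varying divergence of \eqref{eq:div_spatially_OTB}, with $(\hat\alpha,\hat\beta)$ in place of $(\alpha,\beta)$. Your explicit computation of $\varphi^*(x,q)=\max(-1,q/\cdiag(x))$, hence $-\varphi^*(x,-f(x))=\min(1,f(x)/\cdiag(x))$, correctly fills in the one nontrivial step that is new compared to the HUROT case and matches the marginal terms in \eqref{eq:ROTB_dual}; the entropic conjugate and the primal--dual extraction are unchanged, as you note. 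Your final remark on restricting to a compact set of potentials is the right technical caveat and is indeed handled as in the TV case (the $\aprox$ clamp yields $|f|\leq\cdiag\leq\diam(\groundspace)$, so the dual can be restricted to a uniformly bounded equi-Lipschitz family).
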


From this dual formulation, we can derive the optimality conditions on dual potentials:
\begin{equation}
\begin{split}
	f(x) = \min\left(\cdiag(x),  - \epsilon \log \left( \cdiag(x) \braket{ e^{\frac{g - c(x, \cdot)}{\epsilon}}, \frac{\hat{\beta}}{m_g(\hat{\alpha},\hat{\beta})}} \right) \right), \ \hat{\alpha}\text{-ae} \\
	g(y) = \min\left( \cdiag(y), - \epsilon \log \left( \cdiag(y) \braket{ e^{\frac{f - c(\cdot, y)}{\epsilon}}, \frac{\hat{\alpha}}{{m_g(\hat{\alpha},\hat{\beta})}}} \right) \right), \ \hat{\beta}\text{-ae},
\end{split}
\end{equation}
and thus define the corresponding Sinkhorn algorithm
\begin{equation}\label{eq:ROTB_sinkhorn}
\begin{split}
	f_{t+1}(x) \defeq \min\left(\cdiag(x),  - \epsilon \log \left( \cdiag(x) \braket{ e^{\frac{g - c(x, \cdot)}{\epsilon}}, \frac{\hat{\beta}}{m_g(\hat{\alpha},\hat{\beta})}} \right)\right) \\
	g_{t+1}(y) \defeq \min\left(\cdiag(y), - \epsilon \log \left( \cdiag(y) \braket{ e^{\frac{f - c(\cdot, y)}{\epsilon}}, \frac{\hat{\beta}}{m_g(\hat{\alpha},\hat{\beta})}} \right)\right),
\end{split}
\end{equation}

Finally, we introduce the corresponding notion of Sinkhorn divergence:
\begin{equation}\label{eq:ROTB_skdiv}
	\Sk\FG_\epsilon(\alpha,\beta) \defeq \FG_\epsilon(\alpha,\beta) - \frac{1}{2} \FG_\epsilon(\alpha,\alpha) - \frac{1}{2} \FG_\epsilon(\beta,\beta).
\end{equation}

This problem enjoys the same properties as the HUROT model introduced in \cref{sec:HUROT}, as summarized in the following proposition.

\begin{proposition}[Properties of ROTB]~\label{prop:HUROT_properties}
	\begin{enumerate}
		\item $\FG_\epsilon$ and $\Sk\FG_\epsilon$ are $1$-homogeneous. The sequence of potentials produced by \eqref{eq:ROTB_sinkhorn} for the couple of measures $(\lambda \alpha,\lambda \beta)$ is independent of $\lambda$, so are the optimal potentials. If $\pi$ is an optimal plan for $(\alpha,\beta)$, $\lambda \pi$ is optimal for the couple $(\lambda \alpha,\lambda \beta)$. 
		\item $\FG_\epsilon$ is continuous with respect to the weak convergence of the renormalized measures: $\widehat{\alpha_n} \cvweak \hat{\alpha} \Rightarrow \FG_\epsilon(\alpha_n,\beta) \to \FG_\epsilon(\alpha,\beta)$. This holds in particular around the null measure by setting $\FG_\epsilon(0,\beta) \defeq \left(1 + \frac{\epsilon}{2}\right) \Pers(\beta)$ and $\FG_\epsilon(0,0) = 0$. 
		\item Under the same assumptions as in \cref{prop:HOT_sk_positive}, $\Sk\FG_\epsilon(\alpha,\beta) \geq 0$, with equality if and only if $\alpha = \beta$.  
	\end{enumerate}
\end{proposition}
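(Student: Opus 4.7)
The three claims are direct analogues of Propositions \ref{prop:HOT_is_homogeneous}, the HUROT continuity result, and \cref{prop:HOT_sk_positive,eq:HUROT_continuity_null_measure} applied to the renormalized measures $(\hat\alpha, \hat\beta)$. The strategy is to exploit the fact that the ROTB dual \eqref{eq:ROTB_dual} has the same structure as the HUROT dual \eqref{eq:HOT_dual}, with $(\hat\alpha, \hat\beta)$ playing the role of the input measures and the concave envelope $\min(1, f/\cdiag)$ arising from the spatially varying divergence \eqref{eq:div_spatially_OTB} replacing $-\varphi^*(-f)$. Since $\widehat{\lambda\mu} = \lambda \hat\mu$, the renormalization is $1$-homogeneous, so $m_g(\widehat{\lambda\alpha},\widehat{\lambda\beta}) = \lambda m_g(\hat\alpha,\hat\beta)$ and similarly for $m_h$.

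For Part 1, I would plug $(\lambda\alpha,\lambda\beta)$ into \eqref{eq:ROTB_dual} and replay the argument of \cref{prop:HOT_is_homogeneous}: the linear terms $\braket{\min(1, f/\cdiag), \hat\alpha}$ scale by $\lambda$, while the $\lambda^2$ coming from $\widehat{\lambda\alpha}\otimes\widehat{\lambda\beta}$ is compensated by the $1/\lambda$ factors in $1/m_g$ and $1/m_h$. The dual objective is therefore $1$-homogeneous in $(\alpha,\beta)$, forcing the optimal potentials to be unchanged. The same cancellation, applied directly to \eqref{eq:ROTB_sinkhorn}, shows that the Sinkhorn iterates themselves are independent of $\lambda$. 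The scaling of the optimal plan follows from the primal-dual formula of \cref{prop:ROTB_dual} together with $\widehat{\lambda\alpha}\otimes\widehat{\lambda\beta}/m_g(\widehat{\lambda\alpha},\widehat{\lambda\beta}) = \lambda \cdot \hat\alpha\otimes\hat\beta/m_g(\hat\alpha,\hat\beta)$.

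For Part 2, continuity for non-zero measures reduces to the HUROT continuity applied at the renormalized couple, once one checks that the spatially varying divergence \eqref{eq:div_spatially_OTB} still fits the hypotheses of \citep[Prop.~10, Thm.~2]{ot:sejourne2019sinkhorn}; this is straightforward since $\cdiag$ is continuous on $\overline\groundspace$. For continuity around $(0,\beta)$, I would mimic the proof of \cref{eq:HUROT_continuity_null_measure}, using $\pi = 0$ as a sub-optimal plan: with $\varphi(x,0) = 1$ and $\KL(0|\nu) = m(\nu)$, this yields the upper bound
\[ \FG_\epsilon(\alpha_n,\beta) \leq \Pers(\alpha_n) + \Pers(\beta) + \tfrac{\epsilon}{2}(\Pers(\alpha_n)+\Pers(\beta)) \longrightarrow \left(1+\tfrac{\epsilon}{2}\right)\Pers(\beta). \]
The matching lower bound follows from lower semicontinuity combined with Jensen's inequality, as in the HUROT case; the $(0,0)$ case then drops out by invoking the $1$-homogeneity proved in Part 1 to reduce to the uniformly bounded rescaled problem.

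For Part 3, I would transpose the proof of \cref{prop:HOT_sk_positive}. Let $f_\alpha,g_\beta$ denote the optimal potentials for $\FG_\epsilon(\alpha,\alpha)$ and $\FG_\epsilon(\beta,\beta)$; they are dual-feasible for $\FG_\epsilon(\alpha,\beta)$. The completing-the-square manipulation, using an analogue of \cref{lemma:symmetric_HUROT} for the spatially varying case together with the positive definiteness of $K_\epsilon$, yields
\[ \Sk\FG_\epsilon(\alpha,\beta) \geq \left\| e^{f_\alpha/\epsilon}\frac{\hat\alpha}{\sqrt{m(\hat\alpha)}} - e^{g_\beta/\epsilon}\frac{\hat\beta}{\sqrt{m(\hat\beta)}} \right\|_{K_\epsilon} \geq 0. \]
For the equality case, the HUROT argument carries over: the kernel equality forces $(f_\alpha, g_\beta)$ to be a fixed point of the ROTB Sinkhorn map at $(\alpha,\beta)$, the mass-comparison identity of \cref{prop:HOT_sk_positive} then gives $\Pers(\alpha) = \Pers(\beta)$, and finally $\hat\alpha = \hat\beta$, which implies $\alpha = \beta$ since $\cdiag > 0$ on the open domain $\groundspace$. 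The main delicacy will be verifying the symmetric self-dual identity for the spatially varying divergence \eqref{eq:div_spatially_OTB}: the proof of \cref{lemma:symmetric_HUROT} adapts because the penalty couples to each marginal independently, but the $\min(1,\cdot/\cdiag)$ clipping must be handled carefully when swapping the roles of $f$ and $g$.
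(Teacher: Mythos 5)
Your proposal is correct and follows exactly the route the paper takes: the paper itself says ``The proof directly adapts from the corresponding ones in \cref{sec:HUROT},'' and you carry out that adaptation faithfully, consistently substituting $(\hat\alpha,\hat\beta)$ for $(\alpha,\beta)$, using $\widehat{\lambda\mu}=\lambda\hat\mu$ to transfer the scaling argument of \cref{prop:HOT_is_homogeneous}, and verifying that the pointwise Legendre duality $-\varphi^*(x,-f)=\min(1,f/\cdiag(x))$ makes the spatially varying entropy function fit the framework. Your only detail-level departure is in Part 2: for the upper bound you use $\pi=0$ as the sub-optimal plan (giving $\Pers(\alpha_n)+\Pers(\beta)+\tfrac{\epsilon}{2}(\Pers(\alpha_n)+\Pers(\beta))$) rather than the product plan $\hat\alpha_n\otimes\hat\beta$ as in the paper's proof of \cref{eq:HUROT_continuity_null_measure}; both choices converge to the same limit $(1+\tfrac{\epsilon}{2})\Pers(\beta)$, and your choice is arguably cleaner since the cost term vanishes identically. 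You also correctly flag the one genuine point of care---the spatially varying analogue of \cref{lemma:symmetric_HUROT}---and identify the right closing argument for the equality case ($\hat\alpha=\hat\beta$, then $\alpha=\beta$ because $\cdiag>0$ on the open domain).
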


The proof directly adapts from the corresponding ones in \cref{sec:HUROT}. 

\begin{proposition}\label{prop:metrize_cv_weak}
	$\FG_\epsilon$ induces the same notion of convergence as $\FG$, that is, for any sequence $(\alpha_n)_n \in \MM^c(\groundspace)^\N$ and any $\alpha \in \MM^c(\groundspace)$, 
	\[ \Sk\FG_\epsilon(\alpha_n,\alpha) \to 0 \Leftrightarrow \widehat{\alpha_n} \cvweak \hat{\alpha} \Leftrightarrow \Sk\FG(\alpha_n, \alpha) \to 0. \]
\end{proposition}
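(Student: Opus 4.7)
The second equivalence $\widehat{\alpha_n}\cvweak\hat\alpha \Leftrightarrow \Sk\FG(\alpha_n,\alpha)\to 0$ is essentially classical: at $\epsilon=0$, $\Sk\FG$ reduces to $\FG$ (since $\FG(\alpha,\alpha)=0$), and the theorem of Figalli--Gigli \citep{ot:figalli2010newTransportationDistance}, reformulated in terms of renormalized measures in \citep{divol2021understanding}, states exactly that $\FG^{1/2}$ is a complete metric on $\MM^c(\groundspace)$ inducing the topology of weak convergence of the $\hat\alpha$'s. For the first equivalence, the easy direction ($\Leftarrow$) is immediate from \cref{prop:HUROT_properties} item 2: joint continuity of $\FG_\epsilon$ with respect to weak convergence of renormalized measures gives $\FG_\epsilon(\alpha_n,\alpha)\to\FG_\epsilon(\alpha,\alpha)$ and $\FG_\epsilon(\alpha_n,\alpha_n)\to\FG_\epsilon(\alpha,\alpha)$, whence $\Sk\FG_\epsilon(\alpha_n,\alpha)\to 0$ by \eqref{eq:ROTB_skdiv}.

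For the hard direction ($\Rightarrow$), assume $\Sk\FG_\epsilon(\alpha_n,\alpha)\to 0$. The strategy is a standard compactness-and-uniqueness scheme, and the key preliminary step is to establish that $(m(\widehat{\alpha_n}))_n$ stays bounded and bounded away from $0$. I would adapt the chain of inequalities from the proof of \cref{prop:HOT_sk_positive} to the ROTB dual \eqref{eq:ROTB_dual}, producing an MMD-type lower bound roughly of the shape
\[
\Sk\FG_\epsilon(\alpha_n,\alpha)\;\geq\;\Bigl\|\,e^{f_n/\epsilon}\,\widehat{\alpha_n}/\sqrt{m(\widehat{\alpha_n})}-e^{g/\epsilon}\,\hat\alpha/\sqrt{m(\hat\alpha)}\,\Bigr\|_{K_\epsilon}^2,
\]
where $f_n,g$ are symmetric optimal potentials for $\FG_\epsilon(\alpha_n,\alpha_n)$ and $\FG_\epsilon(\alpha,\alpha)$, respectively. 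The clamp $f\mapsto\min(1,f/\cdiag)$ appearing in \eqref{eq:ROTB_dual} lets us restrict to potentials with $f\leq\cdiag$, and since $\cdiag$ is bounded continuous on $\overline\groundspace$, the weights $e^{f_n/\epsilon}$ are uniformly bounded above and below. Mass blow-up $m(\widehat{\alpha_n})\to\infty$ would then prevent the kernel norm from vanishing; mass collapse $m(\widehat{\alpha_n})\to 0$ is ruled out by the null-measure continuity in \cref{prop:HUROT_properties} item 2, which would force $\Sk\FG_\epsilon(\alpha_n,\alpha)\to\Sk\FG_\epsilon(0,\alpha)=(1+\tfrac{\epsilon}{2})\Pers(\alpha)>0$.

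Once boundedness is secured, $(\widehat{\alpha_n})_n$ is tight on the compact $\overline\groundspace$, so Prokhorov yields a subsequence $\widehat{\alpha_{n_k}}\cvweak\mu$ for some finite Radon measure $\mu$. Since $\cdiag>0$ on $\groundspace$, there is a unique $\alpha_\mu\in\MM^c(\groundspace)$ with $\widehat{\alpha_\mu}=\mu$; applying \cref{prop:HUROT_properties} item 2 along this subsequence gives $\Sk\FG_\epsilon(\alpha_{n_k},\alpha)\to\Sk\FG_\epsilon(\alpha_\mu,\alpha)=0$, and the equality case in \cref{prop:HUROT_properties} item 3 then forces $\alpha_\mu=\alpha$, i.e.\ $\mu=\hat\alpha$. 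Since every subsequence admits a further subsequence converging to the same limit $\hat\alpha$, the whole sequence converges, completing the proof.

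The main obstacle, I expect, will lie in the boundedness step: carefully adapting the MMD lower bound from the proof of \cref{prop:HOT_sk_positive} to the ROTB setting (whose divergence \eqref{eq:div_spatially_OTB} is spatially varying and therefore not literally covered by \cref{sec:HUROT}), and leveraging the clamped dual constraints to extract the required uniform control on symmetric potentials in a way that simultaneously excludes blow-up and collapse of $m(\widehat{\alpha_n})$. The subsequent identification of the weak limit is, by contrast, a routine combination of continuity and the equality case of positivity.
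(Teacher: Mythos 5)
Your proposal follows essentially the same route as the paper: quote the known equivalence $\widehat{\alpha_n}\cvweak\hat\alpha \Leftrightarrow \Sk\FG(\alpha_n,\alpha)\to 0$ from the Divol--Lacombe reference, get the easy direction from weak continuity of $\Sk\FG_\epsilon$, and for the hard direction establish $\sup_n m(\widehat{\alpha_n})<\infty$ via the MMD-type lower bound from \cref{prop:HOT_sk_positive} applied to the symmetric potentials, then close with Prokhorov compactness, continuity, and the strict positivity/uniqueness of the Sinkhorn divergence. The paper does exactly this.

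Two remarks on the step you flag as the obstacle. First, the operative uniform control is the \emph{lower} bound on the symmetric potentials, $f_n \geq -\cdiag \geq -L$, not the upper clamp $f_n \leq \cdiag$ you emphasize: since $K_\epsilon$ is bounded away from $0$ on the compact $\overline\groundspace\times\overline\groundspace$, it is $e^{f_n/\epsilon}\geq e^{-L/\epsilon}$ that forces $\bigl\|e^{f_n/\epsilon}\,\widehat{\alpha_n}/\sqrt{m(\widehat{\alpha_n})}\bigr\|_{K_\epsilon}^2 \gtrsim m(\widehat{\alpha_n})$ and hence bounds the mass; the paper attributes the lower bound to the optimality condition (it is brief on this, but that is the bound it uses). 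Second, your extra step of excluding mass collapse $m(\widehat{\alpha_n})\to 0$ is not needed for the compactness-and-uniqueness scheme: a subsequential weak limit equal to the null measure is handled directly by the continuity at $(0,\beta)$ from \cref{prop:HUROT_properties}, which is how the paper treats the $\alpha=0$ case at the end of the proof. These are fine-grained points; structurally your argument is the paper's.
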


\begin{proof}[Proof of \cref{prop:metrize_cv_weak}.]
The fact that $\widehat{\alpha_n} \cvweak \hat{\alpha} \Leftrightarrow \Sk\FG(\alpha_n, \alpha) \to 0$ is already known \citep[Cor.~3.2]{divol2021understanding}. 
Therefore, it remains to show that $\Sk\FG_\epsilon(\alpha_n,\alpha) \to 0 \Leftrightarrow \widehat{\alpha_n} \cvweak \hat{\alpha}$. 

The converse implication is given by the continuity of $\Sk\FG_\epsilon$ with respect to the weak convergence of the normalized measures (\cref{prop:HUROT_properties}). 
Now, assume that $\Sk\FG_\epsilon(\alpha_n,\alpha) \to 0$. 
If the sequence $(\widehat{\alpha_n})_n$ has uniformly bounded mass (i.e.~$(\alpha_n)_n$ has uniformly bounded total persistence), we know that it must be compact with respect to the weak convergence (as $\groundspace$ is bounded). If so, extracting a converging subsequence converging to some limit $\widehat{\alpha_\infty}$ yields by continuity $\Sk\FG_\epsilon(\alpha_\infty, \alpha) = 0$ and thus $\alpha_\infty = \alpha$. 
This makes $(\widehat{\alpha_n})_n$ a compact sequence with $\hat{\alpha}$ as unique limit, implying $\widehat{\alpha_n} \cvweak \hat{\alpha}$. 

Therefore, it remains to show that $\Sk\FG_\epsilon(\alpha_n,\alpha) \to 0 \Rightarrow \sup_n \Pers(\alpha_n) = \sup_n m(\widehat{\alpha_n}) < +\infty$. 
Let $f_n$ denotes the optimal symmetric potential for the dual problem \eqref{eq:ROTB_dual} corresponding to the couple $(\alpha_n,\alpha_n)$, and $f_{\hat{\alpha}}$ be the one corresponding to the couple $(\alpha,\alpha)$. 
The optimality condition on $f_n$ gives $f_n \geq - \cdiag \geq - L$, where $L = \diam(\groundspace)$. 
Assume first that $\alpha_n, \alpha \neq 0$. 
One has
\[ \Sk\FG_\epsilon(\alpha_n,\alpha) \geq \frac{\epsilon}{2} \| e^{\frac{f_n}{\epsilon}} \frac{\widehat{\alpha_n}}{\sqrt{m(\widehat{\alpha_n})}} - e^{\frac{f_{\hat{\alpha}}}{\epsilon}} \frac{\widehat{\alpha}}{\sqrt{m(\widehat{\alpha})}}\|_{K_\epsilon}. \]
Since $\Sk\FG_\epsilon(\alpha_n,\alpha) \to 0$, one has $\sup_n \left\| e^{\frac{f_n}{\epsilon}} \frac{\widehat{\alpha_n}}{\sqrt{m(\widehat{\alpha_n})}} \right\|_{K_\epsilon} < \infty$, 
and since $(f_n)_n$ is (uniformly) lower bounded, necessarily, $(m(\widehat{\alpha_n}))_n$ is bounded, proving the claim. 
If $\alpha = 0$, the same reasoning yields $e^{\frac{f_n}{\epsilon}} \frac{\widehat{\alpha_n}}{\sqrt{m(\widehat{\alpha_n})}} \cvweak 0$, thus $\sup_n m(\widehat{\alpha_n}) < \infty$ and $\alpha_n \cvweak 0$. 
\end{proof}

\paragraph{Numerical illustration.} To showcase the importance of using an homogeneous model in the context of OT with boundary, we propose the following experiment. 
Inspired by the context of Topological Data Analysis (see \cref{rem:link_OTB_TDA}), we consider the half-plane $\groundspace = \{(t_1,t_2),\ t_1 < t_2\} \subset \R^2$ hence $\thediag = \{(t,t),\ t \in \R\}$.\footnote{Note that $\groundspace$ is not bounded here, but in practice the measures of interest (the so-called persistence diagrams or persistence measures) are usually supported on a bounded subset of $\groundspace$.}
We then sample two measures $\alpha,\beta$ with $n = 5$ and $m = 10$ points respectively, and with weight $1$ on each point. 
We then compute the OTB Sinkhorn divergence $\Sk\FG_\epsilon(\lambda \alpha,\lambda\beta)$ for $\lambda \in [0.01, 100]$ using our homogeneous model and the Sinkhorn divergence one would obtain using the standard UROT model (via the iterations \eqref{eq:sinkhorn_algorithm_std}). 
\cref{fig:OTB_distances} showcases the dependence of the result on $\lambda$. 
As expected, our model exhibits $1$-homogeneity. 
In contrast, the standard model yields a highly inhomogeneous behavior which reflects in many structural changes in the resulting transport plans as showcased in \cref{fig:OTB_transport}. 
Computations are run with $\epsilon = 1$.

\begin{figure}[H]
	\center
	\includegraphics[width=0.8\textwidth]{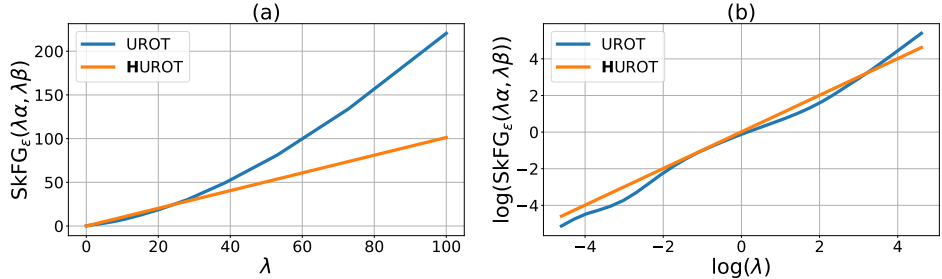}
	\caption{\textbf{Importance of homogeneity for the OTB model.} \emph{(a)} The evolution of $\FG_\epsilon(\lambda \alpha,\lambda \beta)$ for $\lambda \in [0.01, 100]$ using either our homogeneous regularization term \eqref{eq:HOT_regularization_term} (\textbf{H}UROT) or the standard one $+\epsilon \KL(\pi|\hat{\alpha}\otimes\hat{\beta})$ (UROT). As expected, the HUROT model yields a straight line of slope $1$. \emph{(b)} Same curve in log scale. If the standard model was $h$-homogeneous for some $h$ (it is clearly not $1$-homogeneous from the plot (a)), one would expect to observe a line of slope $h$ here. The various slope breaks, due to the non-linearity in the Sinkhorn iterations in this setting, illustrate a highly non-homogeneous behavior.}
	\label{fig:OTB_distances}
\end{figure}

\begin{figure}[H]
	\center
	\includegraphics[width=\textwidth]{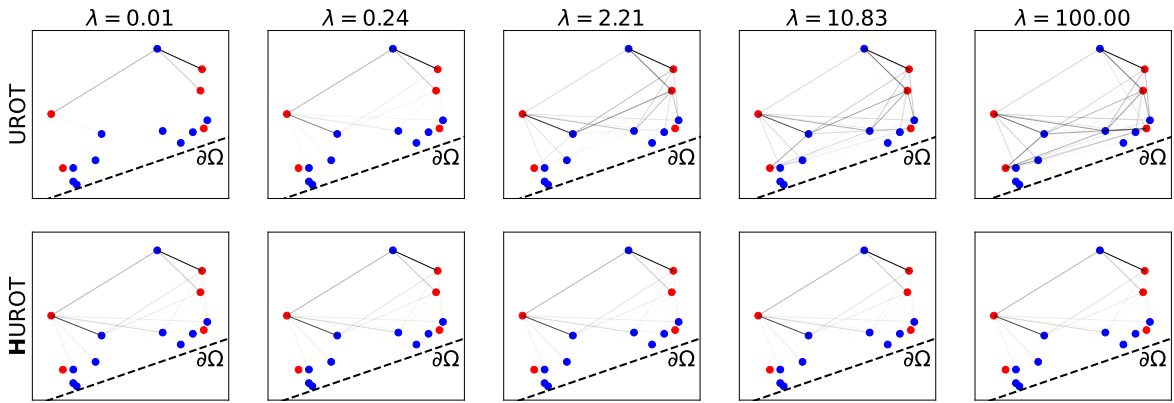}
	\caption{\textbf{Impact of inhomogeneity on the transport plan for the OTB model.} \emph{(Top row)} The transport plans obtained for the couple $(\lambda \alpha,\lambda \beta)$ for varying $\lambda$ using the standard UROT model. Inhomogeneity reflects in structural changes in the resulting transport plan; roughly, increasing $\lambda$ tends to overweight transportation near the boundary $\thediag$. \emph{(Bottom row)} The transportation plans using the HUROT model. As expected, varying $\lambda$ only rescale the transport plan.}
	\label{fig:OTB_transport}
\end{figure}


\section{Discussion}

We believe that the homogeneous UROT model we propose can provide a good alternative to the standard model of Unbalanced Regularized Optimal Transport proposed by Séjourné et al., especially when \textit{(i)} the marginal divergence induces a ``cut-off'' as do the Total Variation or spatially varying divergences involved in OT with boundary and when \textit{(ii)} the masses of the measures considered may be ill-defined (e.g.~depend on the choice of a unit of measurement) or may largely vary on the considered sample. 

It is worth noting that enforcing homogeneity in the regularization term comes with some price as well. 
In particular, in contrast with the standard UROT model, at fixed $\beta$, the map $\alpha \mapsto \OT_{\epsilon,\varphi}^{[H]}( \alpha, \beta)$ is \emph{a priori} not convex with respect to linear interpolation of measures $(1-t)\alpha + t\alpha'$. 
Since the resulting homogeneous Sinkhorn divergence still shares key properties with the standard one, wondering whether there exist a convex reparametrization of $\OT_{\epsilon,\varphi}^{[H]}$ is an important question. 
Other type of convexity properties, for instance along the interpolation curves described by the optimal transport plans, may also be investigated. 

Finally, the development of different numerical tools in the context of OT with boundary, in particular regularized Fréchet means, is a natural follow-up of this work. 
Note that in the context of topological data analysis (which is related to OT with boundary, see \cref{rem:link_OTB_TDA}), regularized barycenters for persistence diagrams have been developed \citep{tda:lacombe2018large}. 
However, the proposed approach uses the Lebesgue measure as reference measure in their entropic regularization term. 
This yields points near the boundary of the space, which tend to outnumber farther points in applications, to outweigh them as well. 
Using instead the reweighted measures $\hat{\alpha},\hat{\beta}$ and our homogeneous formulation is likely to improve the quality of the numerical results that can be obtained.

\paragraph{Acknowledgments} The author wants to thank V.~Divol, T.~Séjourné and F.-X.~Vialard for fruitful discussions that contributed to the development this work. 

\bibliographystyle{abbrv}
\bibliography{biblio}

\clearpage
\appendix

\section{Delayed proofs}

\begin{proof}[Proof of \cref{prop:HOT_dual_formulation}]
The only computations that differ from the proof of duality appearing in \citep{ot:sejourne2019sinkhorn} are those corresponding to our slightly modified entropic regularization term.

Introduce $\xi \defeq \frac{\dd \pi}{\dd \alpha \otimes \beta}$ to alleviate notations. 
\begin{align*}
	\frac{\epsilon}{2} \left( \KL(\pi | \frac{\alpha}{m(\alpha)} \otimes \beta) + \KL(\pi | \alpha \otimes \frac{\beta}{m(\beta)})\right) = &\frac{\epsilon}{2} (\braket{ \xi \log(\xi) - \xi + \log(m(\alpha)) \xi + \frac{1}{m(\alpha)} , \alpha \otimes \beta} \\
	& + \braket{ \xi \log(\xi) - \xi + \log(m(\beta)) \xi + \frac{1}{m(\beta)} , \alpha \otimes \beta} ) \\
	= & \epsilon \braket{\xi \log(\xi) - \xi + \log(\sqrt{m(\alpha)m(\beta)}) \xi + \frac{1}{2} \left(\frac{1}{m(\alpha)} + \frac{1}{m(\beta)} \right) , \alpha \otimes \beta } \\
	= & \epsilon \braket{\xi \log(\xi) - \xi + \log(m_g) \xi + \frac{1}{m_h} , \alpha \otimes \beta }.
\end{align*}

In order to obtain the primal-dual relationship, we write
\begin{align*}
	&- \sup_{\pi} \braket{f \oplus g, \pi} - \braket{c, \pi} - \frac{\epsilon}{2} ( \KL(\pi | \frac{\alpha}{m(\alpha)} \otimes \beta) + \KL(\pi | \alpha \otimes \frac{\beta}{m(\beta)}) )\\
	= & \inf_{\xi} \braket{- (f \oplus g - c) \xi, \alpha \otimes \beta} + \epsilon \braket{\xi \log(\xi) - \xi + \log(m_g) \xi + \frac{1}{m_h} , \alpha \otimes \beta } \\
	= & \inf_\xi \braket{- (f \oplus g - c) \xi + \epsilon ( \xi \log(\xi) - \xi + \log(m_g) \xi + \frac{1}{m_h} , \alpha \otimes \beta }.
\end{align*}
This optimization problem in $\xi$ yields the primal-dual relation \eqref{eq:HOT_primal_dual_relation}.
\begin{equation}\label{eq:relation_primal_dual}
 \xi = \frac{1}{m_g} e^{\frac{f \oplus g - c}{\epsilon}},
 \end{equation}
so that the term $\frac{\epsilon}{2} \left( \KL\left(\pi | \frac{\alpha}{m(\alpha)} \otimes \beta\right) + \KL\left(\pi | \alpha \otimes \frac{\beta}{m(\beta)}\right) \right)$ is equal to
\begin{align*}
	&- (f \oplus g - c) \frac{e^{\frac{f \oplus g - c}{\epsilon}}}{m_g} + \frac{e^{\frac{f \oplus g - c}{\epsilon}}}{m_g} (f \oplus g - c) - \epsilon \log(m_g) \frac{e^{\frac{f \oplus g - c}{\epsilon}}}{m_g} - \epsilon \frac{e^{\frac{f \oplus g - c}{\epsilon}}}{m_g} + \epsilon \log(m_g) \frac{e^{\frac{f \oplus g - c}{\epsilon}}}{m_g} + \epsilon \frac{1}{m_h} \\
	= & - \epsilon \left( \frac{e^{\frac{f \oplus g - c}{\epsilon}}}{m_g} -\frac{1}{m_h} \right).
\end{align*}
Eventually
\begin{equation}
	\OT_\epsilon(\alpha,\beta) = \sup_{f,g} \braket{-\varphi^*(-f) , \alpha} + \braket{-\varphi^*(-g), \beta} - \epsilon \braket{ \frac{e^{\frac{f \oplus g - c}{\epsilon}}}{m_g} -\frac{1}{m_h} , \alpha \otimes \beta }.
\end{equation}
\end{proof}

\begin{proof}[Proof of \cref{lemma:symmetric_HUROT}]
	Let $f \in \CC(\groundspace)$ be optimal in \eqref{eq:HOT_self_dual}. 
	Using the couple $(f,f)$ in \eqref{eq:HOT_dual}, we get
	\[ 	 \OT^{[H]}_{\epsilon,\varphi}(\alpha,\alpha) \geq \sup_{f \in \CC(\groundspace)} 2 \braket{- \varphi^*(-f), \alpha} - \epsilon \braket{e^{\frac{f \oplus f - c}{\epsilon}} - 1 , \frac{\alpha \otimes \alpha}{m(\alpha)}}. \]
	Now, let $\pi = \exp\left(\frac{f \oplus f - c}{\epsilon}\right) \frac{\dd \alpha \otimes \alpha}{m(\alpha)}$. 
	By the symmetry of $c$, its marginals are given by $\pi_1 = \pi_2 = \braket{e^{\frac{f-c}{\epsilon}},\frac{\alpha}{m(\alpha)}} e^{f/\epsilon} \alpha$. 
	As $\pi$ is suboptimal in \eqref{eq:HOT_primal}, we get
	\[ \OT_{\epsilon,\varphi}^{[H]}(\alpha,\alpha) \leq \braket{\pi,c} + 2 D_\varphi(\pi_1|\alpha) + \epsilon \KL\left( \pi | \frac{\alpha \otimes \alpha}{m(\alpha)} \right). \]
	Now, observe that for $i \in \{1,2\}$,
	\[ \frac{\dd \pi_i}{\dd \alpha} = \braket{e^{\frac{f-c}{\epsilon}},\frac{\alpha}{m(\alpha)}} e^{f/\epsilon} \in \partial \varphi^*(-f), \]
	and since $\varphi^*(q) = \sup_p pq - \varphi(p)$, we have that $\forall x \in \groundspace,\ \varphi^*(-f(x)) = -f(x) \frac{\dd \pi_1}{\dd \alpha} - \varphi\left(\frac{\dd \pi_1}{\dd \alpha}\right)$. Therefore, 
	\[ D_\varphi(\pi_1 | \alpha) = \braket{\varphi\left(\frac{\dd \pi_1}{\dd\alpha}\right), \alpha} = \braket{-f \frac{\dd \pi_1}{\dd\alpha} - \varphi^*(-f), \alpha } = -\braket{f,\pi_1} + \braket{-\varphi^*(-f),\pi_1}. \]
	On the other hand, denoting $\zeta = \exp\left(\frac{f \oplus f - c}{\epsilon}\right)$, we have
	\begin{align*}
		\epsilon \KL\left(\pi| \frac{\alpha \otimes \alpha}{m(\alpha)} \right) &= \epsilon \braket{ \log(\zeta) \zeta - \zeta + 1 ,\frac{\alpha \otimes \alpha}{m(\alpha)}} \\
		&= \braket{ f \oplus f - c, \pi} - \epsilon \braket{e^{\frac{f \oplus f - c}{\epsilon}} - 1, \frac{\alpha \otimes \alpha}{m(\alpha)}} \\
		&= 2 \braket{f, \pi_1} - \braket{c,\pi} - \epsilon \braket{e^{\frac{f \oplus f - c}{\epsilon}} - 1, \frac{\alpha \otimes \alpha}{m(\alpha)}}.
	\end{align*}
	Summing the terms together yields the result. 
\end{proof}

\end{document}